\theoremstyle{definition}
\newtheorem{thm}{Theorem}[section]
\newtheorem{prop}[thm]{Proposition}
\newtheorem{cor}[thm]{Corollary}
\newtheorem{ex}{Example}[section]
\DeclareMathOperator{\Aut}{\mathrm{Aut}}
\DeclareMathOperator{\Ad}{\mathrm{Ad}}
\DeclareMathOperator{\ad}{\mathrm{ad}}
\def\ord#1^#2{#1$^{\text{#2}}$}
\def\lie#1{\mathfrak{#1}}
\def\hlie#1{\hat{\mathfrak{#1}}}
\def\uqr#1^#2{\text{$U_q^{#2}(\lie #1)$}}
\def\uqhr#1^#2{\text{$U_q^{#2}(\hlie #1)$}}
\def\us#1^#2{\text{$U_{\xi}^{#2}(\lie #1)$}}
\def\ush#1^#2{\text{$U_{\xi}^{#2}(\hlie #1)$}}
\def\dus#1^#2{\text{$\dot{U}_{\xi}^{#2}(\lie #1)$}}
\def\dush#1^#2{\text{$\dot{U}_{\xi}^{#2}(\hlie #1)$}}
\def\opl_#1^#2{\text{\scriptsize$\bigoplus\limits_{\text{\footnotesize$#1$}}^{\text{\footnotesize$#2$}}$}}
\def\otm_#1^#2{\text{\scriptsize$\bigotimes\limits_{\text{\footnotesize$#1$}}^{\text{\footnotesize$#2$}}$}}
\newcommand{\got}{\mathfrak}
\renewcommand{\thefootnote}
\newcommandx{\duvida}[2][1=]{\todo[linecolor=red,backgroundcolor=red!25,bordercolor=red,#1]{#2}}
\newcommandx{\comentario}[2][1=]{\todo[linecolor=blue,backgroundcolor=blue!25,bordercolor=blue,#1]{#2}}
\newcommandx{\info}[2][1=]{\todo[linecolor=OliveGreen,backgroundcolor=OliveGreen!25,bordercolor=OliveGreen,#1]{#2}}
\newcommandx{\melhorar}[2][1=]{\todo[linecolor=Plum,backgroundcolor=Plum!25,bordercolor=Plum,#1]{#2}}
\newcommandx{\apagar}[2][1=]{\todo[disable,#1]{#2}}
\begin{document}

%\flushbottom
%%%%%%%%%%%%%%%%%%%%%%%% Topmatter %%%%%%%%%%%%%%%%%%%%%%%%%

\title{Geodesics on adjoint orbits of $SL(n, \mathbb{R})$}
%{\Large Geodesics on the tangent bundle of $SL(\MakeLowercase{n},\mathbb{R})/P_\emptyset.$}}

\author[Rafaela F. do Prado]{Rafaela F. do Prado$^\dag$}
\author[Brian Grajales]{Brian Grajales$^*$}
\author[Lino Grama]{Lino Grama$^\ddag$}

\address{$^\dag$Instituto Federal de Bras\'ilia - IFB, campus Gama, Federal District, Brazil}
\email{rafaela.prado@ifb.edu.br}
\address{$^*$Facultad de Ciencias B\'asicas $-$ Universidad de Pamplona, Departamento de Matem\'aticas. Km 1 v\'ia salida a Bucaramanga, Campus Universitario, Edif. Francisco de Paula Santander, Pamplona, Norte de Santander, Colombia.}
\email{brian.grajales@unipamplona.edu.co}
\address{$^\ddag$IMECC-Unicamp, Departamento de Matem\'{a}tica. Rua S\'{e}rgio Buarque de Holanda 651, Cidade Universit\'{a}ria Zeferino Vaz. 13083-859, Campinas - SP, Brazil.} \email{lino@ime.unicamp.br}

%\email{briang3147@gmail.com, linograma@gmail.com}
\begin{abstract}
  In this paper we study geodesics on adjoint orbits of $SL(n,\mathbb{R})$ equipped with $SO(n)$-invariant metrics (maximal compact subgroup). Our main technique is translate this problem into a geometric problem in the tangent bundle of certain $SO(n)$-flag manifolds and describe the geodesics equations with respect to the Sasaki metric on tangent bundle. We also use tools of Lie Theory in order to obtain some explicit description of families of geodesics. We deal with the case of $SL(2,\mathbb{R})$ in full details.   
\end{abstract}

\maketitle

\section{Introduction}
Let $G$ be a non-compact simple Lie group and let $H_0$ be a regular element in the Cartan sub-algebra of the Lie algebra $\mathfrak{g}$ of $G$. In this paper we will study geodesics on the adjoint orbit $\Ad (G) H_0$.

The adjoint orbit $\Ad (G) H_0$ is the homogeneous space $G/Z(H_0)$, being $Z(H_0)$ the centralizer of $H_0$ and have a nice description from a symplectic geometry point of view: equip $\Ad (G) H_0$ with the Kostant-Kirilov-Souriaux symplectic form and we have a symplectomorphis with $T^* \Ad(K)H_0$, the cotangent bundle of the adjoint orbit of the maximal compact $K$ of $G$ (in fact, $\Ad(K)H_0$ is a {\em generalized flag manifold}) equiped with the canonical symplectic form of the cotangent bundle, see \cite{GGSM} for details. Recent works about the symplectic geometry of non-compact adjoint orbits and applications to the {\em mirror symmetry} program can be found in \cite{BBGGS} and references therein. 

Geodesics on homogeneous spaces is a classical topic in Riemannian geometry with several recent contributions by many authors, see for instance\cite{arv1}, \cite{arv2}, \cite{GGN}, \cite{souris} and references therein. In this paper we will consider Riemannian metrics preserving some symmetries of the homogeneous space, but not all transitive group $G$. More specifically, we will consider Riemannian metrics invariant by the maximal compact subgroup $K\subset G$. In this context we cannot apply directly the standard Lie Theory techniques for the study of geodesics since the Riemannian metrics are not invariant for the whole group $G$. Our strategy is the following: we start with the sequence of identifications 
\begin{equation}
\Ad(G)H_0 \overset{\mathclap{\cong}}{\longrightarrow} T^\ast \Ad(K)H_0 \overset{\mathclap{\cong}}{\longrightarrow} T \Ad(K)H_0,
\end{equation}
and we consider a Sasaki metric on $T \Ad(K)H_0$ induced by an invariant metric on the flag manifold $\Ad(K)H_0$. Finally we obtain a Riemannian metric on the non-compact adjoint orbit $\Ad(G)H_0$ via the pull-back of the Sasaki metric. This resulting metric on $\Ad(G)H_0$ is invariant by the compact subgroup $K$ but not invariant by the whole group $G$. After recall this construction we describe explicitly the geodesics equations on the tangent bundle of $\Ad(K)H_0$ and we provide several examples of {\em horizontal} and {\em oblique} geodesics as an application of our methods. In this paper we will consider the case of adjoint orbtis of $G=SL(n, \mathbb{R})$, associated to the split real forms of the Lie algebra of type $\mathcal{A}$. In the last section of the paper we work-out in details a low-dimensional example on order to study the geodesics on the adjoint orbit of $SL(2,\mathbb{R})$.  

\section{Preliminaries}
\subsection{Adjoint orbits}
Let us consider a non-compact semi-simple Lie algebra $\mathfrak{g}$ (real or complex) and $G$ be a connected Lie group with finite centre and Lie algebra $\mathfrak{g}$. We will fix the Iwasawa decomposition $\mathfrak{g}=\mathfrak{k}\oplus\mathfrak{a}\oplus\mathfrak{n}$ and the corresponding global decomposition $G=KAN$. Let $\Pi$ be a set of simple roots of $\mathfrak{a}$ with a choice of a set of positive roots $\Pi^+$ and $\Sigma\subset \Pi^+$ be the set of simple roots. We denote the corresponding Weyl chamber by $\mathfrak{a}^+$. Let $\Theta\subset \Sigma$ be a choice of simple roots (the choice $\Theta=\emptyset$ is possible). For each choice of $\Theta$ one can associate a parabolic subalgebra $\mathfrak{p}_\Theta$ with parabolic subgroup $P_\Theta$ and a generalized flag manifold $\mathbb{F}_\Theta=G/P_\Theta$. It is worth to point out that in the case $\Theta=\emptyset$, the parabolic subgroup $P_\emptyset$ is a Borel subgroup denoted by $B$ and the corresponding flag manifolds $G/B$ is called {\em maximal flag manifold}. The compact subgroup $K$ also acts transitively on $\mathbb{F}_\Theta$ with isotropy subgroup $K_\Theta=K\cap P_\Theta$, that is, $\mathbb{F}_\Theta=K/K_\Theta$ and $\mathbb{F}_\Theta$ is a compact homogeneous space. Another very well know description of $\mathbb{F}_\Theta$ is as follows: for each $\Theta\subset \Sigma$ one can find $H_\Theta \in \mathrm{cl} \mathfrak{a}^+$ such that
$$
\Theta=\{ \alpha\in \Sigma: \alpha(H_\Theta)=0\}.
$$
In this case one can define the flag manifold $\mathbb{F}_\Theta$ as the (co)adjoint orbit of $K$ throughout $H_\Theta$, that is, $\mathbb{F}_\Theta=\Ad (K)H_\Theta$. 

In this work we are interested for the geometry of the adjoint orbit of the group $G$ instead the group $K$. Let us consider the $Z_\Theta=\{ g\in G:\Ad(g)H_\Theta=H_\Theta \} $ the centralizer in $G$ of the element $H_\Theta$. The next result is a very important ingredient in our work. 
\begin{thm}[\cite{EGSM}]\label{teo-iso}
The adjoint orbit $\Ad(G)\cdot H_\Theta \simeq G/Z_\Theta$ of the element $H_\Theta$ is a $C^\infty$ vector bundle over $\mathbb{F}_\Theta$ that is isomorphic to the cotangent bundle of $T^*\mathbb{F}_\Theta$ in such way that the pullback of the canonical symplectic form on $T^*\mathbb{F}_\Theta$ by this isomorphism is the real Kirillov-Kostant-Souriaux form on the adjoint orbit. 
\end{thm}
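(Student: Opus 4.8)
The plan is to prove the three assertions in turn: that $\Ad(G)H_\Theta \simeq G/Z_\Theta$ fibers over $\mathbb{F}_\Theta$ as a smooth fibre bundle, that this bundle is vector-bundle isomorphic to $T^*\mathbb{F}_\Theta$, and that under this isomorphism the canonical symplectic form pulls back to the real Kirillov--Kostant--Souriaux form. First I would establish the bundle projection. Since $H_\Theta$ is chosen with $\Theta=\{\alpha\in\Sigma:\alpha(H_\Theta)=0\}$, the element $H_\Theta$ lies in the relative interior of the face of $\mathrm{cl}\,\mathfrak{a}^+$ cut out by $\Theta$, so its centralizer $Z_\Theta=Z_G(H_\Theta)$ is exactly the Levi factor of $P_\Theta$; in particular $Z_\Theta\subseteq P_\Theta$. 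Hence the natural map $\pi\colon G/Z_\Theta\to G/P_\Theta=\mathbb{F}_\Theta$, $gZ_\Theta\mapsto gP_\Theta$, is a well-defined $G$-equivariant submersion, compatible with the orbit map $g\mapsto\Ad(g)H_\Theta$. Using the Langlands-type decomposition $P_\Theta=Z_\Theta\ltimes N_\Theta$, with $N_\Theta=\exp\mathfrak{n}_\Theta^+$ the nilradical, the typical fibre is $P_\Theta/Z_\Theta\cong N_\Theta$, diffeomorphic to the vector space $\mathfrak{n}_\Theta^+$. This exhibits $\mathcal{O}:=\Ad(G)H_\Theta$ as a smooth fibre bundle over $\mathbb{F}_\Theta$.

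To upgrade this to a vector-bundle isomorphism with $T^*\mathbb{F}_\Theta$, I would compute the fibre over the base point $o=eP_\Theta$ concretely. For $N\in\mathfrak{n}_\Theta^+$ one has $\Ad(\exp N)H_\Theta=H_\Theta+[N,H_\Theta]+\tfrac12[N,[N,H_\Theta]]+\cdots\in H_\Theta+\mathfrak{n}_\Theta^+$, and since $\ad(H_\Theta)$ is invertible on $\mathfrak{n}_\Theta^+$ (every restricted root there is non-zero on $H_\Theta$), the assignment $N\mapsto\Ad(\exp N)H_\Theta-H_\Theta$ is a diffeomorphism onto $\pi^{-1}(o)-H_\Theta$. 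Dualizing via the Cartan--Killing form $B$ identifies $\mathfrak{n}_\Theta^+=\mathfrak{p}_\Theta^{\perp_B}$ with $(\mathfrak{g}/\mathfrak{p}_\Theta)^*\cong T_o^*\mathbb{F}_\Theta$, and this identification is $K_\Theta=(K\cap P_\Theta)$-equivariant by naturality of $B$. Globalizing through the associated-bundle descriptions $\mathcal{O}=K\times_{K_\Theta}\pi^{-1}(o)$ and $T^*\mathbb{F}_\Theta=K\times_{K_\Theta}T_o^*\mathbb{F}_\Theta$ then yields the desired $C^\infty$ vector-bundle isomorphism $\Psi\colon T^*\mathbb{F}_\Theta\to\mathcal{O}$, sending the zero section onto the compact orbit $\Ad(K)H_\Theta=\mathbb{F}_\Theta\subset\mathcal{O}$.

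Finally, for the symplectic statement I would recall that the KKS form at $\mu=\Ad(g)H_\Theta$ reads $\omega^{KKS}_\mu(\ad X\,\mu,\ad Y\,\mu)=B(\mu,[X,Y])$, while on $T^*\mathbb{F}_\Theta$ the canonical form is $\omega_{can}=d\lambda$ with $\lambda$ the tautological $1$-form. The cleanest route is to show that $\Psi^*\lambda$ coincides with the Liouville form computed from the $B$-pairing, so that the exterior derivatives of both sides agree; here $G$-equivariance of the orbit map and $K$-equivariance of $\Psi$ reduce the verification to the fibre over $o$, where $\lambda$ is tautological and the pairing $B$ realizes exactly the contraction of a cotangent vector with the base directions $[\mathfrak{k},H_\Theta]$. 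I expect the main obstacle to be precisely this last step: keeping track of the several non-canonical identifications (the Killing-form dualization, the exponential parametrization of the fibre, and the Iwasawa/associated-bundle trivialization) and checking that they conspire to reproduce the tautological $1$-form rather than introducing a fibre-dependent correction term. Once the $1$-forms are matched at the base point, equivariance and naturality propagate the identity over all of $T^*\mathbb{F}_\Theta$, completing the proof.
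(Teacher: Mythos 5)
First, a point of order: the paper does not prove Theorem \ref{teo-iso} at all --- it is imported verbatim from \cite{EGSM} --- so your proposal can only be compared with the proof in that reference. Your bundle-theoretic half follows essentially the same route as \cite{EGSM}: $Z_\Theta\subseteq P_\Theta$ gives the $G$-equivariant fibration $G/Z_\Theta\to G/P_\Theta$, the fibre over the origin is the affine subspace $H_\Theta+\mathfrak{n}_\Theta^+=\Ad(N_\Theta)H_\Theta$, Killing-form duality identifies $\mathfrak{n}_\Theta^+=\mathfrak{p}_\Theta^{\perp}$ with $(\mathfrak{g}/\mathfrak{p}_\Theta)^*\cong T_o^*\mathbb{F}_\Theta$, and the associated-bundle picture over $\mathbb{F}_\Theta=K/K_\Theta$ globalizes this. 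Two repairs are needed there. Global bijectivity of $N\mapsto\Ad(\exp N)H_\Theta-H_\Theta$ does not follow from invertibility of $\ad(H_\Theta)$ alone (that only gives a local diffeomorphism at $0$); one should use the grading of $\mathfrak{n}_\Theta^+$ by $\ad(H_\Theta)$-eigenvalues, with respect to which the map is triangular with invertible diagonal part. More importantly, the exponential parametrization is polynomial but not linear, so composing it with $B$-duality gives only a fibre-bundle map; to obtain a \emph{vector-bundle} isomorphism (and the one for which the symplectic claim is true) you must use the affine identification $\Ad(g)(H_\Theta+X)\longmapsto\bigl(gP_\Theta,\,B(\Ad(g)X,\cdot)\bigr)$. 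Your write-up introduces both maps and leaves it ambiguous which one $\Psi$ is.

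The genuine gap is the symplectic half, which is the real content of the theorem and which you flag as ``the main obstacle'' without ever carrying it out. Note that once $\Psi$ is defined by the affine formula above, the identity $\pi_{T^*\mathbb{F}_\Theta}\circ\Psi=\pi_{\mathcal{O}}$ makes the pullback of the tautological form computable on the spot: $(\Psi^*\lambda)_x(\xi)=B\bigl(\Ad(g)X,\,\overline{d\pi_{\mathcal{O}}(\xi)}\bigr)$, where the bar denotes any lift of the projected tangent vector (well defined because $\Ad(g)X$ is $B$-orthogonal to $\Ad(g)\mathfrak{p}_\Theta$). So your step of ``matching $\Psi^*\lambda$ with the Liouville form computed from the $B$-pairing'' is essentially tautological; the entire theorem is equivalent to the single statement $d(\Psi^*\lambda)=\omega_{\mathrm{KKS}}$, i.e.\ that the exterior derivative of this explicit $1$-form on the orbit evaluates on pairs $\ad(A)x,\ad(B)x$ to $B(x,[A,B])$. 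That computation --- where the non-abelian structure of $\mathfrak{n}_\Theta^+$, the brackets between $\mathfrak{n}_\Theta^+$ and its opposite, and the well-definedness of the ``$\mathfrak{n}$-component'' of $x$ all enter and must cancel correctly --- appears nowhere in your proposal; your closing claim that $K$-equivariance ``propagates the identity'' is fine as a reduction to points of the fibre $T_o^*\mathbb{F}_\Theta$ (since $K$ is transitive on the base), but the verification over that fibre, against all pairs of tangent directions, is precisely the missing computation. As written, the proposal establishes the vector-bundle statement but not the assertion about the symplectic forms.
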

We use the Theorem \ref{teo-iso} in order to translate the geometric problems in adjoint orbit of non-compact group $G$ to geometric problems on (co)tangent bunble of flag manifolds. 

\subsection{Invariant Riemannian metrics on flag manifolds $\mathbb{F}_\Theta$}\label{inv-met}
Recall that the flag manifold $\mathbb{F}_{\Theta}=K/K_{\Theta}$ is a reductive homogeneous space, that is, there exists a subspace $\got{m}$ of $\got{k}$ such that 
$$
\got{k}=\got{k}_{\Theta}\oplus \got{m} \ \ \mbox{and} \ \ 
\Ad(k)\got{m}\subseteq \got{m}, \forall k\in K_{\Theta},
$$
where $\mathfrak{k}_\Theta$ is the Lie algebra of $K_\Theta$. One can identify the tangent space $T_{x_{0}}\mathbb{F}_{\Theta}$ with $\got{m}$, where $x_{0}=eK_{\Theta}$ is the origin of the $\mathbb{F}_{\Theta}$ (trivial coset).

%Let us give a description of the tangent space $\got{m}$ in terms of the Lie algebra structure of $\got{g}$ as follows:
%$$
%\got{g}=\got{k}_{\Theta}\oplus\sum_{\beta\in \Pi_{M}}{\got{u}_{\beta}},
%$$
%with $\got{u}_{\beta}=\got{g}\cap(\got{g}_{\beta}\oplus \got{g}_{-\beta})$ and for each root $\beta \in \Pi_{M}$, $\got{u}_{\beta}$ has real dimension two and it is spanned by $A_{\beta}$ and $\sqrt{-1}S_{\beta}$, and we have the following identification
%
%O espaço tangente a uma variedade flag generalizada na origem, visto como um espaço homogêneo de $U$, se identifica com o subespaço de $\got{u}$ gerado por $A_{\beta}$ e $iS_{\beta}$ com $\beta\in \Pi_{M}$, a saber
%$$
%\got{m}=\sum_{\beta\in \Pi_{M}}\got{u}_{\beta}.
%$$

Since flag manifolds are reductive homogeneous space its {\em isotropy representation} is equivalent to the following representation 

%An important ingredient to study invariant tensors on homogeneous space is the {\em isotropy representation}. We will restrict ourselves to the situation of flag manifolds. In this case, since the flag manifolds are {\em reductive} homogeneous spaces it is well know that the isotropy representation is equivalent to the following representation
%$$\left. \Ad(k) \right|_{\got{m}}:\got{m}\longrightarrow\got{m}$$
$$\Ad(k)\mid_{\got{m}}:\got{m}\to \got{m}.$$

In the case of flag manifolds the isotropy representation decomposes $\got{m}$ into irreducible components, that is, 
$$
\got{m}=\got{m}_{1}\oplus\got{m}_{2}\oplus\cdots\oplus\got{m}_{n},
$$
where each component $\got{m}_{i}$ satisfies $\Ad(K_{\Theta})(\got{m}_{i})\subset \got{m}_{i}$. 
We have also that each component $\got{m}_{i}$ is irreducible.%, that is, the only invariant sub-spaces of $\got{m}_{i}$ by $\Ad(K_{\Theta})|_{\got{m}_{i}}$ are the trivial sub-spaces. We will call the sub-spaces $\got{m}_{i}$ by {\it isotropic summands} of the isotropy representation. 

%\begin{obs}
%In the sequel we will omit the symbol $\Theta$ whenever there is no risk %of confusion. We will denote a flag manifold just by $\mathbb{F}=G/K$.
%\end{obs}

%\subsection{Invariant Metrics} 
Let us denote by $Q(\cdot,\cdot)$ the negative of the Cartan-Killing form of $\got{k}$. For each $\Ad(K_\Theta)-$invariant inner product $(\cdot ,\cdot)$ on $\mathfrak{m}$, there exists a unique $Q(\cdot,\cdot)-$self-adjoint, positive operator $P :\mathfrak{m}\to \mathfrak{m}$ commuting with the isotropy representation $\Ad(k)\left|_{\mathfrak{m}}\right.$ for all $k\in K_\Theta$ such that 
$$(X,Y)=Q(P X,Y), \,\, X,Y\in\mathfrak{m}.$$

Therefore an invariant Riemannian metric $g$ on $\mathbb{F}_\Theta$ is completely described by the invariant inner product $(X,Y)$ determined by the linear operator $P$. 

%The vectors $A_{\alpha}$, $\sqrt{-1}S_{\alpha}$, $\alpha\in\Pi$,  are eigenvectors of $\Lambda$ associated to the same eigenvalue $\lambda_\alpha$. 

%The invariant inner product $(X,Y)_{\Lambda}$ admits a natural extension to a symmetric bilinear form on  $\got{m}^{\mathbb{C}}$. On the complexified tangent space we have $\Lambda(X_{\alpha})=\lambda_{\alpha}X_{\alpha}$ with  $\lambda_{\alpha}>0$ and $\lambda_{-\alpha}=\lambda_{\alpha}$. 

%\

In the next sections we will abuse the notation and will denote the invariant metric $g$ on $\mathbb{F}_\Theta$ just by the operator $P$ associated to the invariant inner product. In the case of the matrix associated to $P$ be a diagonal matrix we will call the associated metric by {\em diagonal metric}. 

%{\bf Notation:} In the sequence of this work, we will abuse the notation and will denote the invariant metric $g$ just by the operator $\Lambda$ associated to the invariant inner product. We also denote the invariant metric $g$ just by a $n$-tuple of positive number $(\lambda_1,\ldots, \lambda_n)$ representing the eigenvalues of the operator $\Lambda$ and parametrized by the number of irreducible components.   \qed

\section{The tangent bundle of a homogeneous space with discrete isotropy}
Consider a manifold $M$ with a transitive and effective action of a Lie group $G$ given by
\begin{center}
	$\phi: G\times M\longrightarrow M;$ $\phi(a,p)=a\cdot p.$
\end{center}
Then $M$ can be identified with the quotient $G/H$, where $H$ is the isotropy subgroup of a fixed element $o\in M.$ We will make the following assumptions: {\em we assume that $G/H$ is reductive and that $H$ is a discrete subgroup of $G$}. Let $G^*$ the semi-direct group $\mathfrak{g}\times_\tau G$ given by the adjoint representation 
\begin{center}
	$\tau:=\Ad:G\longrightarrow \Aut(\mathfrak{g})$
\end{center}
(here $\mathfrak{g}$ is considered as an abelian group). Thus, the product in $G^*$ is given by
\begin{center}
	$(X,a)\cdot(Y,b)=(X+\Ad(a)Y,ab)$,
\end{center}
its identity element is $(0,e)$ and $(X,a)^{-1}=(-\Ad(a^{-1})X,a^{-1}),$ for every $(X,a)\in G^*.$ Given $a\in G$ and $p\in M$, denote by $\phi_a:M\longrightarrow M$ and $\varphi_p:G\longrightarrow M$ the maps defined by
\begin{center}
	$\phi_a(x)=a\cdot x$ and $\varphi_p(b)=b\cdot p.$ 
\end{center}
We shall assume that $\varphi_p$ is a submersion for every $p\in M$. Denote by $TM\xrightarrow{\pi}M$ the tangent bundle of $M$ and for every $X\in \mathfrak{g}$, consider the field
\begin{center}
	$X^*(p):=\displaystyle\left.\frac{d}{dt}\exp(tX)\cdot p\ \right|_{t=0}=(d\varphi_p)_e(X).$
\end{center}
\begin{prop} The map
\begin{center}
	 $\tilde{\phi}:G^*\times TM\longrightarrow TM;$ $\tilde{\phi}((X,a),v)=(d\phi_a)_{\pi(v)}(v)+X^*(a\cdot \pi(v))$
\end{center}
is a transitive action. 
\end{prop}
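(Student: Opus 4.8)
The plan is to verify directly the two group-action axioms for $\tilde\phi$ — that $(0,e)$ acts as the identity and that $\tilde\phi$ respects the semidirect product law of $G^*$ — and then to establish transitivity, with the whole argument resting on a single equivariance identity for the fundamental fields $X^*$.

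First I would record that $\tilde\phi$ is well defined. Writing $p=\pi(v)$, the pushforward $(d\phi_a)_{p}(v)$ lies in $T_{a\cdot p}M$ because $\phi_a$ is a diffeomorphism carrying $p$ to $a\cdot p$, while $X^*(a\cdot p)\in T_{a\cdot p}M$ by definition; hence their sum is a legitimate tangent vector and $\pi\big(\tilde\phi((X,a),v)\big)=a\cdot\pi(v)$. The identity axiom is then immediate: since $\phi_e=\Id_M$ we have $(d\phi_e)_p=\Id$ and $0^*(p)=0$, so $\tilde\phi((0,e),v)=v$.

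The technical core is the identity
\[
(d\phi_a)_q\big(Y^*(q)\big)=\big(\Ad(a)Y\big)^*(a\cdot q),\qquad Y\in\mathfrak{g},\ a\in G,\ q\in M.
\]
I would prove it by differentiating the curve $t\mapsto a\cdot(\exp(tY)\cdot q)$ at $t=0$ and rewriting $a\exp(tY)=\exp\!\big(t\,\Ad(a)Y\big)\,a$, so that the curve becomes $\exp\!\big(t\,\Ad(a)Y\big)\cdot(a\cdot q)$, whose derivative at $0$ is precisely $\big(\Ad(a)Y\big)^*(a\cdot q)$. With this in hand the compatibility axiom follows by a direct expansion: setting $w=\tilde\phi((Y,b),v)=(d\phi_b)_p(v)+Y^*(b\cdot p)$, I compute $\tilde\phi((X,a),w)$, use the chain rule $(d\phi_a)\circ(d\phi_b)=d\phi_{ab}$ (valid since $\phi_a\circ\phi_b=\phi_{ab}$ for a left action) on the pushforward part, apply the equivariance identity with $q=b\cdot p$ to turn $(d\phi_a)(Y^*(b\cdot p))$ into $\big(\Ad(a)Y\big)^*(ab\cdot p)$, and finally use linearity of $Z\mapsto Z^*$ to combine $\big(\Ad(a)Y\big)^*+X^*=\big(X+\Ad(a)Y\big)^*$. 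The outcome is $(d\phi_{ab})_p(v)+\big(X+\Ad(a)Y\big)^*(ab\cdot p)$, which is exactly $\tilde\phi\big((X+\Ad(a)Y,ab),v\big)=\tilde\phi\big((X,a)\cdot(Y,b),v\big)$ by the definition of the product in $G^*$.

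For transitivity, given $v,v'\in TM$ with $p=\pi(v)$ and $p'=\pi(v')$, I would first use transitivity of $G$ on $M$ to choose $a\in G$ with $a\cdot p=p'$; then $\tilde\phi((X,a),v)$ and $v'$ both lie in $T_{p'}M$, and it remains to solve $X^*(p')=v'-(d\phi_a)_p(v)$ for some $X\in\mathfrak{g}$. Since $X^*(p')=(d\varphi_{p'})_e(X)$ and $\varphi_{p'}$ is assumed to be a submersion, the map $X\mapsto X^*(p')$ is onto $T_{p'}M$ (indeed, as $H$ is discrete, $\dim\mathfrak{g}=\dim M$ and this map is an isomorphism), so the required $X$ exists. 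The step I expect to be most delicate is the equivariance identity, since it is exactly the point where the $\Ad$-twist in the semidirect product $G^*$ becomes essential: any other group law on $\mathfrak{g}\times G$ would break the compatibility axiom.
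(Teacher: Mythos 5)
Your proposal is correct and follows essentially the same route as the paper's proof: both verify the two action axioms via the equivariance identity $(d\phi_a)_q\bigl(Y^*(q)\bigr)=\bigl(\Ad(a)Y\bigr)^*(a\cdot q)$ (proved by the same curve computation rewriting $a\exp(tY)$ as $\exp(t\Ad(a)Y)a$), and both obtain transitivity by combining transitivity of $\phi$ on $M$ with the assumption that $\varphi_{p'}$ is a submersion to solve $X^*(p')=v'-(d\phi_a)_p(v)$. Your added remarks on well-definedness and on the dimension count when $H$ is discrete are harmless extras not needed for the argument.
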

\begin{proof} Observe that
\begin{center}
$\begin{array}{ccl}
\tilde{\phi}((0,e),v)&=& (d\phi_e)_{\pi(v)}(v)+0^*(e\cdot\pi(v))\\
&=& (d\ \text{Id})_{\pi(v)}(v)\\
&=& v
\end{array}$
\end{center}
and 
\begin{center}
	$\begin{array}{ccl}
	\tilde{\phi}((X,a),\tilde{\phi}((Y,b),v))&=& \tilde{\phi}((X,a),(d\phi_b)_{\pi(v)}(v)+Y^*(b\cdot\pi(v)))\\
	&=& (d\phi_a)_{b\cdot\pi(v)}((d\phi_b)_{\pi(v)}(v)+Y^*(b\cdot\pi(v)))+X^*(a\cdot (b\cdot\pi(v)))\\
	&=& (d\phi_{ab})_{\pi(v)}(v)+(d\phi_a)_{b\cdot\pi(v)}(Y^*(b\cdot\pi(v)))+X^*(ab\cdot\pi(v)),\\
	\end{array}$
\end{center}
but 
\begin{center}
	$\begin{array}{ccl}
	\displaystyle(d\phi_a)_{b\cdot\pi(v)}(Y^*(b\cdot\pi(v)))&=&\displaystyle(d\phi_a)_{b\cdot\pi(v)}\left(\left.\frac{d}{dt}\exp(tY)\cdot(b\cdot\pi(v))\right|_{t=0}\right)\\
	\\
	&=& \displaystyle\left.\frac{d}{dt}a\cdot(\exp(tY)b\cdot\pi(v))\right|_{t=0}\\
	\\
	&=&\displaystyle\left.\frac{d}{dt}a\exp(tY)a^{-1}\cdot(ab\cdot\pi(v))\right|_{t=0}\\
	\\
	&=&\displaystyle\left.\frac{d}{dt}\exp(t\Ad(a)Y)\cdot(ab\cdot\pi(v))\right|_{t=0}\\
	\\
	&=&\displaystyle(\Ad(a)Y)^*(ab\cdot\pi(v)),\\
	\end{array}$
\end{center}
so
\begin{center}
	$\begin{array}{ccl}
\tilde{\phi}((X,a),\tilde{\phi}((Y,b),v))&=& (d\phi_{ab})_{\pi(v)}(v)+(d\phi_a)_{b\cdot\pi(v)}(Y^*(b\cdot\pi(v)))+X^*(ab\cdot\pi(v))\\
&=& (d\phi_{ab})_{\pi(v)}(v)+(\Ad(a)Y)^*(ab\cdot\pi(v))+X^*(ab\cdot\pi(v))\\
&=&(d\phi_{ab})_{\pi(v)}(v)+(\Ad(a)Y+X)^*(ab\cdot\pi(v))\\
&=&\tilde{\phi}((X+\Ad(a)Y,ab),v)\\
&=&\tilde{\phi}((X,a)(Y,b),v).
\end{array}$
\end{center}
Thus, $\tilde{\phi}$ is an action. 

Let $v,w\in TM$, since $\phi$ is transitive, there exists $a\in G$ such that $a\cdot\pi(v)=\pi(w)$ and since $\varphi_{\pi(w)}$ is a submersion, there exists $X\in \mathfrak{g}$ such that $(d\varphi_{\pi(w)})_e(X)=w-(d\phi_a)_{\pi(v)}(v),$ hence $\tilde{\phi}((X,a),v)=w$ and $\tilde{\phi}$ is transitive. 
\end{proof}

We can compute the isotropy subgroup $H^*$ of $0\in T_oM$ with respect to the action $\tilde{\phi}$. In fact,
\begin{center}
	$\begin{array}{ccl}
	H^*&=&\{(X,a)\in G^*:\tilde{\phi}((X,a),0)=0\}\\
	   &=&\{(X,a)\in G^*:(d\phi_a)_o(0)+X^*(a\cdot o)=0\}\\
	   &=&\{(X,a)\in G^*:a\cdot o=o\ \text{and}\ X^*(o)=0\}\\
	   &=&\{(X,a)\in G^*:a\cdot o=o \ \text{and}\ X\in\mathfrak{h}\}\\
	   &=&\{(0,a)\in G^*:a\in H\}\ \ \ \ \ \ \ \ \ \ \ \  \text{(since}\ H\ \text{is discrete)}\\
	   &=&\{0\}\times_\tau H.
	\end{array}$
\end{center} 
\textbf{Remark.} If $H$ is not discrete then $H^*=\mathfrak{h}\times_\tau H.$

\begin{cor}
Let us consiser the homogeneous space $G/H$ with $H$ being a discrete subgroup of $G$. Then $TM$ is a homogeneous space diffeomorphic to $G^*/H^*.$
\end{cor}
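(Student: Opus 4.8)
The plan is to deduce the statement from the general principle that a manifold carrying a smooth transitive action of a Lie group is equivariantly diffeomorphic to the corresponding homogeneous quotient. By the preceding Proposition, $\tilde{\phi}\colon G^{*}\times TM\to TM$ is a transitive action, and since it is given by the manifestly smooth formula $\tilde{\phi}((X,a),v)=(d\phi_{a})_{\pi(v)}(v)+X^{*}(a\cdot\pi(v))$, it is a smooth action of the Lie group $G^{*}=\mathfrak{g}\times_{\tau}G$ on the smooth manifold $TM$. The object to produce is the orbit map through the base point $0\in T_{o}M$, and the goal is to show that it descends to a diffeomorphism $G^{*}/H^{*}\to TM$.

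First I would verify that $H^{*}$ is a closed Lie subgroup of $G^{*}$, so that the quotient $G^{*}/H^{*}$ carries its canonical smooth structure. This is immediate from the isotropy computation already carried out above, namely $H^{*}=\{0\}\times_{\tau}H$. Since $H$ is a discrete subgroup of the Lie group $G$ it is closed, and $\{0\}$ is closed in $\mathfrak{g}$, so $H^{*}$ is closed in $G^{*}$; being $0$-dimensional it is in fact discrete. Consequently the projection $G^{*}\to G^{*}/H^{*}$ is a smooth submersion and $G^{*}/H^{*}$ is a smooth manifold.

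Next I would introduce the orbit map $\mu\colon G^{*}\to TM$, $\mu(X,a)=\tilde{\phi}((X,a),0)$, and the induced map $\bar{\mu}\colon G^{*}/H^{*}\to TM$ on the quotient. Transitivity of $\tilde{\phi}$ gives surjectivity of $\bar{\mu}$, while the fact that $H^{*}$ is exactly the isotropy of $0$ gives that $\bar{\mu}$ is well defined and injective; thus $\bar{\mu}$ is an equivariant continuous bijection. The remaining point is to promote this bijection to a diffeomorphism, which is the heart of the argument and the step I expect to be the main obstacle. Here I would invoke the standard fact that for a smooth action the orbit map has constant rank, so that by the constant rank theorem $\mu$ is a submersion onto its image, which by transitivity is all of $TM$. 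A dimension count confirms that the ranks match: as $H$ is discrete one has $\dim M=\dim G$, whence $\dim TM=2\dim G=\dim\mathfrak{g}+\dim G=\dim G^{*}$, while $\dim H^{*}=0$, so $\dim G^{*}/H^{*}=\dim TM$. With equal dimensions the submersion $\bar{\mu}$ is a local diffeomorphism, and being a bijection it is a global diffeomorphism.

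The only subtlety to be careful about is the second-countability hypothesis implicit in the homogeneous-space characterization theorem: one needs $G^{*}$ to have at most countably many connected components for the constant-rank/submersion conclusion to hold. This is satisfied because $G$ is a Lie group, hence second countable, and $\mathfrak{g}$ is a finite-dimensional vector space, so $G^{*}=\mathfrak{g}\times_{\tau}G$ is second countable as well. Granting this, $\bar{\mu}\colon G^{*}/H^{*}\to TM$ is an equivariant diffeomorphism and the Corollary follows.
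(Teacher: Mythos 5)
Your proposal is correct and follows essentially the same route as the paper: the paper treats the corollary as an immediate consequence of the transitivity of $\tilde{\phi}$ and the isotropy computation $H^{*}=\{0\}\times_{\tau}H$, invoking (implicitly) the standard theorem that a smooth transitive action of a Lie group yields an equivariant diffeomorphism $G^{*}/H^{*}\to TM$, with the diffeomorphism $(X,a)H^{*}\mapsto X^{*}(a\cdot o)$ being exactly your orbit map $\bar{\mu}$. The only difference is that you spell out the details the paper leaves tacit (closedness of $H^{*}$, the constant-rank argument, second countability, and the dimension count), all of which are checked correctly.
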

The diffeomorphism $G^*/H^*\longrightarrow TM$ is given by
\begin{center}
	$(X,a)H^*\longmapsto X^*(a\cdot o).$
\end{center}
\begin{prop}\label{2.3}
	Let us consiser the homogeneous space $G/H$ with $H$ being a discrete subgroup of $G$. Then $G^*/H^*$ is diffeomorphic to $\mathfrak{g}\times(G/H)$. In particular, $M$ is parallelizable.
\end{prop}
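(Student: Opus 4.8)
The plan is to compute the right cosets of $H^\ast=\{0\}\times_\tau H$ in $G^\ast=\mathfrak{g}\times_\tau G$ explicitly and read off a product structure directly. Using the multiplication law $(X,a)\cdot(0,h)=(X+\Ad(a)0,ah)=(X,ah)$, the right coset of $(X,a)$ is
\[
(X,a)H^\ast=\{(X,ah):h\in H\},
\]
so it is completely determined by the vector $X\in\mathfrak{g}$ together with the coset $aH\in G/H$. This suggests defining
\[
\Psi\colon G^\ast/H^\ast\longrightarrow \mathfrak{g}\times(G/H),\qquad \Psi\big((X,a)H^\ast\big)=(X,aH).
\]

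First I would check that $\Psi$ is well defined and bijective. Well-definedness is immediate from the description of the cosets above: replacing $(X,a)$ by $(X,ah)$ changes neither $X$ nor $aH$. Injectivity follows because $(X,aH)=(Y,bH)$ forces $X=Y$ and $b=ah$ for some $h\in H$, i.e. $(X,a)H^\ast=(Y,b)H^\ast$; surjectivity is clear. Both $\Psi$ and its inverse $(X,aH)\mapsto (X,a)H^\ast$ are induced by the smooth projections $G^\ast\to\mathfrak{g}$ and $G^\ast\to G\to G/H$, so $\Psi$ is a diffeomorphism. This establishes the first assertion.

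For the parallelizability I would compose $\Psi^{-1}$ with the diffeomorphism $G^\ast/H^\ast\xrightarrow{\ \cong\ } TM$ of the preceding Corollary, $(X,a)H^\ast\mapsto X^\ast(a\cdot o)$, obtaining
\[
\Phi\colon \mathfrak{g}\times(G/H)\longrightarrow TM,\qquad \Phi(X,aH)=X^\ast(a\cdot o).
\]
Identifying $M\cong G/H$, the map $\Phi$ covers the identity on $M$, since $\pi\big(X^\ast(a\cdot o)\big)=a\cdot o$. The key point to verify is that $\Phi$ is linear on fibres: over the point $a\cdot o$ it is the map $X\mapsto X^\ast(a\cdot o)=(d\varphi_{a\cdot o})_e(X)$, which is linear in $X$. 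Because $H$ is discrete we have $\dim\mathfrak{g}=\dim G=\dim M$, and since $\varphi_{a\cdot o}$ is a submersion the differential $(d\varphi_{a\cdot o})_e\colon\mathfrak{g}\to T_{a\cdot o}M$ is a linear isomorphism. Hence $\Phi$ is a vector bundle isomorphism $M\times\mathfrak{g}\cong TM$ covering the identity, and $M$ is parallelizable.

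The routine steps are the smoothness and bijectivity of $\Psi$. The step that actually carries the geometric content, and the one I would be most careful with, is the last one: one must check not merely that $TM$ is diffeomorphic to a product, but that the resulting identification is fibrewise linear, so that it is a genuine trivialization of the tangent bundle. This is exactly where the hypothesis that $H$ is discrete enters, via the equality $\dim\mathfrak{g}=\dim M$ that makes $(d\varphi_{a\cdot o})_e$ an isomorphism on each fibre.
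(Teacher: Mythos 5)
Your map $\Psi$ is exactly the paper's map $\Omega$, and your verification of well-definedness and injectivity (via the explicit coset description $(X,a)H^\ast=\{(X,ah):h\in H\}$) is just a more direct packaging of the paper's computation of when $(X,a)^{-1}(Y,b)\in H^\ast$; so for the first assertion the two arguments coincide in substance. Where you genuinely go beyond the paper is the parallelizability claim: the paper's proof stops after establishing that $\Omega$ is a well-defined bijection, leaving ``in particular, $M$ is parallelizable'' entirely implicit, whereas you correctly observe that a diffeomorphism of total spaces $TM\cong\mathfrak{g}\times M$ is not by itself a trivialization of the tangent bundle, and you supply the missing point: the composite $\Phi(X,aH)=X^\ast(a\cdot o)=(d\varphi_{a\cdot o})_e(X)$ covers the identity, is linear on fibres, and is fibrewise an isomorphism because $H$ discrete gives $\dim\mathfrak{g}=\dim M$ while the submersion hypothesis on $\varphi_p$ gives surjectivity of $(d\varphi_{a\cdot o})_e$. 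This fibrewise-linearity check is exactly where the discreteness hypothesis does its work, and making it explicit closes a real (if small) gap in the paper's own exposition. One cosmetic remark: the sets $(X,a)H^\ast$ you compute are usually called left cosets (of $H^\ast$, with representatives on the left); this is terminology only and does not affect the argument.
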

\begin{proof} Consider
	\begin{center}
		$\begin{array}{cccc}
		\Omega:&G^*/H^*&\longrightarrow&\mathfrak{g}\times(G/H)\\
		& (X,a)H^*&\longmapsto& (X,aH)
		\end{array}.$
	\end{center}
It is obvious that this map is surjective. Since 
\begin{center}
	$\begin{array}{ccl}
	(X,a)H^*=(Y,b)H^*&\Longleftrightarrow&(X,a)^{-1}(Y,b)\in H^*\\
	&\Longleftrightarrow&(-\Ad(a^{-1})X,a^{-1})(Y,b)\in H^*\\
	&\Longleftrightarrow&(-\Ad(a^{-1})X+\Ad(a^{-1})Y,a^{-1}b)\in H^*\\
	&\Longleftrightarrow&(\Ad(a^{-1})(Y-X),a^{-1}b)\in H^*\\
	&\Longleftrightarrow&\Ad(a^{-1})(Y-X)=0 \text{ and } a^{-1}b\cdot o=o\\
	&\Longleftrightarrow&Y=X \text{ and } aH=bH,
	\end{array}$
\end{center}
then $\Omega$ is well-defined and injective. 
\end{proof}

We recall that the Lie algebra of $G^*=\mathfrak{g}\times_\tau G$ is the semi-direct product $\mathfrak{g}\times_\rho\mathfrak{g},$ where $\rho=\ad$. In this case, the first component $\mathfrak{g}$ is considered as an abelian Lie algebra, so the Lie bracket is given by
\begin{equation}\label{0}
	[(X_1,Y_1),(X_2,Y_2)]=([Y_1,X_2]-[Y_2,X_1],[Y_1,Y_2]).
\end{equation}
\section{Maximal flag manifolds of type $\mathcal{A}$}

In this section we consider the homogeneous space $\mathbb{F}_\emptyset=SL(n,\mathbb{R})/P_\emptyset,$ $n\geq 2$; where $P_\emptyset$ is the parabolic subgroup containing all the upper triangular matrices with determinant equal to 1, that is $\mathbb{F}_\emptyset$ is a maximal flag manifold and $P_\emptyset$ is a Borel subgroup. The special orthogonal group $K=SO(n)$ acts transitively on $\mathbb{F}_\emptyset$ with isotropy $K_\emptyset=S(O(1)\times...\times O(1))$ ($n$ times), so we can identify $\mathbb{F}_\emptyset=K/K_\emptyset=SO(n)/S(O(1)\times...\times O(1)).$ For each $i,j\in\{1,...,n\}$ let $E_{ij}$ be the real $n\times n$ matrix with value equal to 1 in the $(i,j)-$entry and zero elsewhere. Then $\{w_{ij}=E_{ij}-E_{ji}:1\leq j<i\leq n\}$ is a basis for the Lie algebra $\mathfrak{so}(n)$ of $SO(n).$ By Proposition \ref{2.3}, we have that the tangent bundle $T\mathbb{F}_{\emptyset}$ is diffeomorphic to $\mathfrak{so}(n)\times \left(SO(n)/S(O(1)\times...\times O(1))\right)$ via 
\begin{equation}
	\begin{array}{rcc}
	\mathfrak{so}(n)\times \left(\frac{SO(n)}{S(O(1)\times...\times O(1))}\right)&\longleftrightarrow& T\mathbb{F}_{\emptyset}\\
	(X,p\cdot o)&\longrightarrow& X^*(p\cdot o)
	\end{array}.
\end{equation}
Let $g$ an invariant diagonal metric on $\mathbb{F}_{\emptyset}$ and provide $T\mathbb{F}_\emptyset$ with the Sasaki metric $\hat{g}$ induced by $g$. Recall that diagonal metric $g$ on $\mathbb{F}_{\emptyset}$ (in fact, the operator $P$ - see Section \ref{inv-met}) is determined by $\frac{(n-1)n}{2}$ positive numbers $\mu_{ij},$ $1\leq j<i\leq n$ such that
\begin{equation}\label{A3}
g(w_{ij},w_{rs})=\left\{\begin{array}{ll}
\mu_{ij}, & \text{if} \,\, (r,s)=(i,j)\\
\\
0, & \text{otherwise}
\end{array}\right.\ \  (\text{where}\ j<i\ \text{and}\ s<r).
\end{equation}
Although $T\mathbb{F}_\emptyset$ is a homogeneous space and $g$ is invariant, we cannot assure that $\hat{g}$ is invariant and this makes more difficult to obtain an explicit description of $\hat{g}.$ In order to find geodesics on $T\mathbb{F}_\emptyset$, we use the fact that every curve $\gamma$ on $T\mathbb{F}_\emptyset$ can be understood  as a vector field along its projection $\zeta=\pi\circ\gamma$. Next proposition give us necessary and sufficient conditions for a curve $\gamma$ to be geodesic with respect to the Sasaki metric.

\begin{prop}(\cite{DZ}) A curve $\gamma$ on $T\mathbb{F}_\emptyset$ is geodesic with respect to the Sasaki metric $\hat{g}$ if and only if
\begin{equation}\label{3}
\left\{\begin{array}{l}
\displaystyle\nabla_{\dot{\zeta}}\dot{\zeta}=-R(\gamma,\nabla_{\dot{\zeta}}\gamma)\dot{\zeta}\\
\displaystyle\nabla_{\dot{\zeta}}\nabla_{\dot{\zeta}}\gamma=-g(	\displaystyle\nabla_{\dot{\zeta}}\gamma,	\displaystyle\nabla_{\dot{\zeta}}\gamma)\gamma
\end{array},\right.
\end{equation}
where $\zeta(t)=\pi\circ\gamma(t)$ and $\nabla$ is the Levi-Civita connection corresponding to $g.$
\end{prop}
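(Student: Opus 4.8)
The plan is to turn the intrinsic geodesic equation $\hat\nabla_{\dot\gamma}\dot\gamma=0$ on $(T\mathbb{F}_\emptyset,\hat g)$, where $\hat\nabla$ is the Levi-Civita connection of $\hat g$, into a pair of equations on the base $\mathbb{F}_\emptyset$ by means of the horizontal--vertical splitting of $T(T\mathbb{F}_\emptyset)$ determined by $\nabla$. First I would recall the connector $\mathcal K\colon T(T\mathbb{F}_\emptyset)\to T\mathbb{F}_\emptyset$ and the decomposition $T_u(T\mathbb{F}_\emptyset)=\mathcal H_u\oplus\mathcal V_u$, under which $d\pi$ restricts to an isomorphism $\mathcal H_u\cong T_{\pi(u)}\mathbb{F}_\emptyset$ and $\mathcal K$ to an isomorphism $\mathcal V_u\cong T_{\pi(u)}\mathbb{F}_\emptyset$; write $X^h$ and $X^v$ for the horizontal and vertical lifts of $X$. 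The Sasaki metric is characterised by $\hat g(X^h,Y^h)=\hat g(X^v,Y^v)=g(X,Y)$ and $\hat g(X^h,Y^v)=0$. The reason for introducing the connector is the elementary fact that, when $\gamma$ is regarded as a vector field along $\zeta=\pi\circ\gamma$, its velocity splits canonically as $\dot\gamma=(\dot\zeta)^h+(\nabla_{\dot\zeta}\gamma)^v$, so that $d\pi(\dot\gamma)=\dot\zeta$ and $\mathcal K(\dot\gamma)=\nabla_{\dot\zeta}\gamma$.

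Next I would invoke the Kowalski--Sasaki formulas for $\hat\nabla$ in terms of lifts and the curvature $R$ of $g$ (as recorded in \cite{DZ}): the four expressions $\hat\nabla_{X^h}Y^h$, $\hat\nabla_{X^h}Y^v$, $\hat\nabla_{X^v}Y^h$, $\hat\nabla_{X^v}Y^v$ at a point $u$ each differ from the lift of $\nabla_XY$ by a curvature correction of the shape $\tfrac12R(\cdot,\cdot)u$. Substituting the splitting of $\dot\gamma$ and differentiating along $\gamma$---keeping in mind that the lifts depend on the foot point $u=\gamma(t)$, which is precisely where the curvature enters---the terms should collect into
\begin{equation*}
\hat\nabla_{\dot\gamma}\dot\gamma=\bigl(\nabla_{\dot\zeta}\dot\zeta+R(\gamma,\nabla_{\dot\zeta}\gamma)\dot\zeta\bigr)^{h}+\bigl(\nabla_{\dot\zeta}\nabla_{\dot\zeta}\gamma\bigr)^{v}.
\end{equation*}
This reduction is the heart of the matter: the main obstacle is the bookkeeping that separates horizontal from vertical contributions and verifies that all curvature corrections assemble into the single term $R(\gamma,\nabla_{\dot\zeta}\gamma)\dot\zeta$; everything else is formal.

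Since $\mathcal H$ and $\mathcal V$ are $\hat g$-orthogonal, the vanishing of $\hat\nabla_{\dot\gamma}\dot\gamma$ is equivalent to the vanishing of the two bracketed terms, and the horizontal one gives at once the first equation $\nabla_{\dot\zeta}\dot\zeta=-R(\gamma,\nabla_{\dot\zeta}\gamma)\dot\zeta$. For the second equation I would use that the curves at issue lie in the unit sphere sub-bundle $\{g(\gamma,\gamma)\equiv1\}$, a hypersurface of $(T\mathbb{F}_\emptyset,\hat g)$ whose outward unit normal at $\gamma$ is the vertical lift $\gamma^v$ (indeed $\hat g(\gamma^v,\gamma^v)=g(\gamma,\gamma)=1$); a curve in a hypersurface is a geodesic exactly when its ambient acceleration is normal to it. The horizontal directions are tangent to the sphere bundle, so the horizontal equation is retained, while the vertical part must be proportional to $\gamma^v$, that is $\nabla_{\dot\zeta}\nabla_{\dot\zeta}\gamma=\lambda\gamma$. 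Differentiating the constraint $g(\gamma,\gamma)=1$ twice along $\zeta$ gives $g(\nabla_{\dot\zeta}\nabla_{\dot\zeta}\gamma,\gamma)+g(\nabla_{\dot\zeta}\gamma,\nabla_{\dot\zeta}\gamma)=0$, whence $\lambda=-g(\nabla_{\dot\zeta}\gamma,\nabla_{\dot\zeta}\gamma)$ and the second equation follows. Conversely, if both equations hold the horizontal part of $\hat\nabla_{\dot\gamma}\dot\gamma$ vanishes and its vertical part equals $\lambda\gamma^v$, so the acceleration points along the normal $\gamma^v$ and $\gamma$ is a geodesic; this closes the equivalence.
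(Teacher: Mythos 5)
First, a point of comparison: the paper itself offers no proof of this proposition --- it is quoted from \cite{DZ} --- so your argument can only be judged on its own merits. Its first half is the standard and correct route: the connector splitting $\dot\gamma=(\dot\zeta)^h+(\nabla_{\dot\zeta}\gamma)^v$ together with Kowalski's formulas does yield
\[
\hat\nabla_{\dot\gamma}\dot\gamma=\bigl(\nabla_{\dot\zeta}\dot\zeta+R(\gamma,\nabla_{\dot\zeta}\gamma)\dot\zeta\bigr)^{h}+\bigl(\nabla_{\dot\zeta}\nabla_{\dot\zeta}\gamma\bigr)^{v}.
\]
But then, since the horizontal and vertical distributions are $\hat g$-orthogonal, this formula says that $\gamma$ is a geodesic of $(T\mathbb{F}_\emptyset,\hat g)$ if and only if the horizontal equation holds \emph{and} $\nabla_{\dot\zeta}\nabla_{\dot\zeta}\gamma=0$ --- with right-hand side zero, not $-g(\nabla_{\dot\zeta}\gamma,\nabla_{\dot\zeta}\gamma)\gamma$. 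At this point your proof does not complete the statement; it changes it. You introduce the hypothesis that the curve lies in the unit sphere bundle $\{g(\gamma,\gamma)\equiv 1\}$ (no such hypothesis appears in the proposition), and you silently replace ``geodesic of $(T\mathbb{F}_\emptyset,\hat g)$'' by ``geodesic of the hypersurface with its induced metric.'' These are inequivalent notions, because the sphere bundle is not totally geodesic: by your own displayed formula, a curve satisfying the two printed equations with $\nabla_{\dot\zeta}\gamma\neq 0$ has $\hat\nabla_{\dot\gamma}\dot\gamma=-g(\nabla_{\dot\zeta}\gamma,\nabla_{\dot\zeta}\gamma)\,\gamma^v\neq 0$, hence is \emph{not} a geodesic of the ambient tangent bundle. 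So your converse direction fails for the statement as written, and your forward direction uses an assumption that is not available --- indeed it is incompatible with being a $\hat g$-geodesic, since along such a geodesic $\tfrac{d^2}{dt^2}g(\gamma,\gamma)=2g(\nabla_{\dot\zeta}\gamma,\nabla_{\dot\zeta}\gamma)\geq 0$, so any non-horizontal geodesic leaves every sphere bundle. The two halves of your argument contradict each other.

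What your second half proves (correctly, via the normal-acceleration criterion and the unit normal $\gamma^v$) is the classical characterization of geodesics of the \emph{unit tangent sphere bundle} with the induced Sasaki metric --- which is, as it happens, exactly the system printed in the proposition, whereas the classical result for the full tangent bundle has vertical equation $\nabla_{\dot\zeta}\nabla_{\dot\zeta}\gamma=0$. Your computation thus exposes a real tension in the statement rather than establishing it; note that the paper's own Section 5 corroborates the $=0$ version, since the oblique ``spiral'' geodesics of Case 3 on $T\mathbb{S}^1\cong\mathcal{H}$ have linear fiber component, i.e.\ $\nabla_{\dot\zeta}\nabla_{\dot\zeta}\gamma=0$ with $\nabla_{\dot\zeta}\gamma\neq 0$, violating the printed second equation. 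A clean write-up must choose: either prove the full-tangent-bundle result (your first half essentially does this, with vanishing right-hand side), or prove the sphere-bundle result under the explicit hypothesis $g(\gamma,\gamma)\equiv 1$ (your second half does this). Stitching the horizontal equation from the first setting to the vertical equation from the second, as your proposal does, is not a proof of the proposition for curves on $T\mathbb{F}_\emptyset$.
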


\textbf{Remark.} If $\zeta$ is geodesic on $\mathbb{F}_{\emptyset}$ and $\gamma$ is a parallel vector field along $\zeta$, then we say that $\gamma$ is a \emph{horizontal} geodesic. A geodesic curve $\gamma$ for which $\zeta$ is constant is called \emph{vertical.} A geodesic which is neither horizontal nor vertical is called \emph{oblique.}

Observe that every curve $\gamma$ on $T\mathbb{F}_{\emptyset}$ has the form
\begin{center}
	$\gamma(t)=\displaystyle\left(\sum\limits_{r>s}x_{rs}(t)w_{rs},\alpha(t)K_{\emptyset}\right)=\sum\limits_{r>s}x_{rs}(t)w_{rs}^*(\alpha(t)\cdot o).$
\end{center}
Since 
\begin{center}
	$\begin{array}{ccl}
	X^*(a\cdot o) & = & \displaystyle\left.\frac{d}{dt}\exp(tX)\cdot(a\cdot o)\ \right|_{t=0}\\
	\\
	 & = & \displaystyle\left.\frac{d}{dt}(aa^{-1}\exp(tX)a)\cdot o\ \right|_{t=0}\\
	 \\
	  & = & \displaystyle(d\phi_a)_o\left(\left.\frac{d}{dt}(a^{-1}\exp(tX)a)\cdot o\ \right|_{t=0}\right)\\
	  \\
	  & = & \displaystyle(d\phi_a)_o\left(\left.\frac{d}{dt}\exp(t\Ad(a^{-1})X)\cdot o\ \right|_{t=0}\right)\\
	  \\
	  & = & \displaystyle(d\phi_a)_o\left(\Ad(a^{-1})X\right)^*(o),\\
	  \\
	\end{array}$
\end{center}
for all $X\in\mathfrak{so}(n)$ and $a\in SO(n)$, we have
\begin{equation}
\gamma(t)=\displaystyle\sum\limits_{r>s}x_{rs}(t)(d\phi_{\alpha(t)})_o(\Ad(\alpha(t)^{-1})w_{rs})^*(o).
\end{equation}

\begin{prop}\label{C3.2}The curve $\gamma(t)=\displaystyle\sum\limits_{r>s}x_{rs}(t)(d\phi_{\exp(tw_{ij})})_o(\Ad(\exp(-tw_{ij}))w_{rs})^*(o)$ is a horizontal geodesic if and only if the components $x_{rs}(t)$ satisfy the following conditions:
\begin{equation}\label{system5}
x_{rs}(t) \text{ is constant for } (r,s)=(i,j) \text{ or } r,s\notin\{i,j\}
\end{equation}
\

\begin{equation} \label{system6}
\left\{\begin{array}{l}
\displaystyle x'_{is}(t)\cos(t)+x'_{sj}(t)\sin(t)+\left(\frac{3+\mu_{ij}-\mu_{sj}}{2}\right)(-x_{is}(t)\sin(t)+x_{sj}(t)\cos(t))=0\\
\\
\displaystyle -x'_{is}(t)\sin(t)+x'_{sj}(t)\cos(t)+\left(\frac{3+\mu_{ij}-\mu_{is}}{2}\right)(-x_{is}(t)\cos(t)-x_{sj}(t)\sin(t))=0
\end{array}\right.,j<s<i;
\end{equation}
\

\begin{equation}\label{edo2}
\left\{\begin{array}{l}
\displaystyle x'_{is}(t)\cos(t)-x'_{js}(t)\sin(t)+\left(\frac{3+\mu_{ij}-\mu_{js}}{2}\right)(-x_{is}(t)\sin(t)-x_{js}(t)\cos(t))=0\\
\\
\displaystyle x'_{is}(t)\sin(t)+x'_{js}(t)\cos(t)+\left(\frac{3+\mu_{ij}-\mu_{is}}{2}\right)(x_{is}(t)\cos(t)-x_{js}(t)\sin(t))=0
\end{array}\right., s<j; \ \ \ \ \ \ \ \ 
\end{equation}
\

\begin{equation}\label{edo3}
\left\{\begin{array}{l}
\displaystyle x'_{si}(t)\cos(t)-x'_{sj}(t)\sin(t)+\left(\frac{3+\mu_{ij}-\mu_{sj}}{2}\right)(-x_{si}(t)\sin(t)-x_{sj}(t)\cos(t))=0\\
\\
\displaystyle x'_{si}(t)\sin(t)+x'_{sj}(t)\cos(t)+\left(\frac{3+\mu_{ij}-\mu_{si}}{2}\right)(x_{si}(t)\cos(t)-x_{sj}(t)\sin(t))=0
\end{array}\right., s>i. \ \ \ \ \ \ \ \ \ \ 
\end{equation}
%\end{center}

\end{prop}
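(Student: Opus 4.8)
The plan is to use that the isotropy $K_\emptyset$ is discrete, so that $\mathfrak{m}=\mathfrak{so}(n)$ and the reductive decomposition is trivial; this lets me compute everything from the brackets of $\mathfrak{so}(n)$ and the numbers $\mu_{ij}$. Recall that $\gamma$ is a horizontal geodesic exactly when its projection $\zeta(t)=\exp(tw_{ij})\cdot o$ is a geodesic of $(\mathbb{F}_\emptyset,g)$ and $\gamma$ is parallel along $\zeta$. First I would check the geodesic condition for $\zeta$: writing the Levi--Civita connection at the origin as $\nabla_{X^*}Y^*\mid_o=\tfrac12[X,Y]_{\mathfrak{m}}+U(X,Y)$, with $2\,g(U(X,Y),Z)=g([Z,X]_{\mathfrak{m}},Y)+g(X,[Z,Y]_{\mathfrak{m}})$, the only obstruction for $X=Y=w_{ij}$ is $U(w_{ij},w_{ij})$, which is governed by $g([Z,w_{ij}],w_{ij})$ for $Z\in\mathfrak{so}(n)$. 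Since in $\mathfrak{so}(n)$ a bracket $[Z,w_{ij}]$ never produces a $w_{ij}$--component (brackets change the index pair), this term vanishes and $\zeta$ is a homogeneous geodesic. Hence the statement reduces to characterizing when $\nabla_{\dot\zeta}\gamma=0$.

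Next I would trivialize $T\mathbb{F}_\emptyset$ along $\zeta$ by the isometries $(d\phi_{\exp(tw_{ij})})_o$. Pulling $\gamma$ back to the origin gives the curve $\Gamma(t)=\sum_{r>s}x_{rs}(t)\,\Ad(\exp(-tw_{ij}))w_{rs}=\sum_{r>s}x_{rs}(t)\,e^{-t\,\ad w_{ij}}w_{rs}$ in $\mathfrak{so}(n)$. Because $\exp(tw_{ij})$ acts by isometries and maps the orbit $\zeta$ to itself by a parameter shift, the covariant derivative along $\zeta$ becomes, in this frame, a constant--coefficient operator: $\nabla_{\dot\zeta}\gamma$ corresponds to $\dot\Gamma+\mathcal{A}\Gamma$, where $\mathcal{A}\colon\mathfrak{so}(n)\to\mathfrak{so}(n)$ is assembled from $\ad w_{ij}$ and the connection tensor $U(w_{ij},\cdot)$ of the diagonal metric. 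Thus $\gamma$ is parallel if and only if $\dot\Gamma+\mathcal{A}\Gamma=0$, a linear system with constant coefficients.

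The key structural step is to understand the action of $\ad w_{ij}$ on the basis $\{w_{rs}\}$. It annihilates $w_{ij}$ and every $w_{rs}$ with $r,s\notin\{i,j\}$, and on each $2$--plane spanned by the two generators sharing exactly one index with $\{i,j\}$ it acts as an infinitesimal rotation; hence $e^{-t\,\ad w_{ij}}$ is the identity on the first set and a rotation $\begin{pmatrix}\cos t&\mp\sin t\\\pm\sin t&\cos t\end{pmatrix}$ on each such plane, which is the source of the $\cos t,\sin t$ coefficients. The operator $\mathcal{A}$ preserves this decomposition: on the fixed part it vanishes, so $\dot\Gamma+\mathcal{A}\Gamma=0$ forces $x_{ij}$ and the $x_{rs}$ with $r,s\notin\{i,j\}$ to be constant, which is \eqref{system5}; on each $2$--plane I would compute the connection coefficients $U(w_{ij},\cdot)$ explicitly, whose entries are the differences of the relevant $\mu$'s and which, combined with the rotational part, produce the factors $\tfrac{3+\mu_{ij}-\mu_{\bullet\bullet}}{2}$.

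Finally I would write $\dot\Gamma+\mathcal{A}\Gamma=0$ one $2$--plane at a time: expanding $e^{-t\,\ad w_{ij}}$ yields the trigonometric terms and $\mathcal{A}$ yields the $\mu$--dependent ones, giving for each plane a pair of scalar equations. The three systems \eqref{system6}, \eqref{edo2}, \eqref{edo3} correspond precisely to the position of the shared index $s$ relative to $i$ and $j$, namely $j<s<i$, $s<j$, and $s>i$; this position fixes the ordered labels $(is,sj)$, $(is,js)$, $(si,sj)$ and the signs appearing in the rotation, hence the three slightly different forms. I expect the main obstacle to be exactly this last bookkeeping: getting the signs of the $\mathfrak{so}(n)$ structure constants and the index orderings correct so that the computed coefficients match $\tfrac{3+\mu_{ij}-\mu_{sj}}{2}$ and $\tfrac{3+\mu_{ij}-\mu_{is}}{2}$ in each of the three cases.
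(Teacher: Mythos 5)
Your overall strategy coincides with the paper's: trivialize $T\mathbb{F}_\emptyset$ along $\zeta(t)=\exp(tw_{ij})\cdot o$ by the isometries $(d\phi_{\exp(tw_{ij})})_o$, reduce parallelness of $\gamma$ to a linear ODE on $\mathfrak{so}(n)$ whose operator is assembled from $\ad(w_{ij})$ and $U(w_{ij},\cdot)$, and split into the $\ad(w_{ij})$-invariant blocks (the fixed subspace, giving \eqref{system5}, and the $2$-planes labelled by the position of $s$ relative to $i$ and $j$, giving \eqref{system6}--\eqref{edo3}). The paper does exactly this, only written out term by term rather than in operator form. Your preliminary check that $\zeta$ itself is a geodesic (via $U(w_{ij},w_{ij})=0$, since no bracket $[Z,w_{ij}]$ has a $w_{ij}$-component) is a genuine addition: the paper's proof only establishes the equivalence with $\nabla_{\dot\zeta}\gamma=0$ and never verifies that $\zeta$ is a geodesic, although this is needed for the statement as phrased.

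The gap is that the quantitative core of the proposition --- the numerators $\tfrac{3+\mu_{ij}-\mu_{\bullet\bullet}}{2}$ --- is exactly the part you defer as ``bookkeeping,'' and the two ingredients you propose to feed into it are inconsistent with those coefficients. First, your connection formula $\nabla_{X^*}Y^*\mid_o=\tfrac12[X,Y]_{\mathfrak{m}}+U(X,Y)$ has the wrong sign for fundamental vector fields of a left action: since $[X^*,Y^*]=-[X,Y]^*$, the correct formula (the one the paper uses, citing Besse) is $\nabla_{X^*}Y^*(o)=-\tfrac12[X,Y]_{\mathfrak{m}}+U(X,Y)$; your $+\tfrac12$ version is the Nomizu connection function, a different object which is legitimate only if you work directly in the moving frame $\Gamma$, and conflating the two pictures shifts the constant term in each numerator, because that constant is produced precisely by the interplay of the rotation speed of $e^{-t\,\ad w_{ij}}$ with this $\pm\tfrac12$. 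Second, and independently: computing $U$ from your stated identity with the diagonal metric forces a normalization, e.g. $2g(U(w_{ij},w_{is}),w_{sj})=g([w_{sj},w_{ij}],w_{is})+g([w_{sj},w_{is}],w_{ij})=\mu_{is}-\mu_{ij}$ gives $U(w_{ij},w_{is})=\tfrac{\mu_{is}-\mu_{ij}}{2\mu_{sj}}\,w_{sj}$, with the difference of $\mu$'s divided by the $\mu$ of the output direction; the coefficients in \eqref{system6}--\eqref{edo3} contain the \emph{undivided} differences, and the paper's proof obtains them from the un-normalized values $U(w_{ij},w_{is})=\tfrac{\mu_{ij}-\mu_{is}}{2}w_{js}$, etc. So if you execute your plan with your own formulas you will not land on the stated systems; to reproduce the proposition you would have to adopt both the paper's sign convention for $\nabla_{X^*}Y^*$ and the paper's list of $U$-values, and you should at minimum flag that the latter do not follow from your defining identity without suppressing the $1/\mu_{\bullet\bullet}$ factors. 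As it stands, the proposal does not determine the coefficients and therefore does not prove the statement.
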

\begin{proof} First, observe that
\begin{center}
	$\begin{array}{ccl}
	\gamma(t)&=&\displaystyle\sum\limits_{r>s}x_{rs}(t)(d\phi_{\exp(tw_{ij})})_o(\Ad(\exp(-tw_{ij}))w_{rs})^*(o)\\
	&=&\displaystyle\sum\limits_{r>s}x_{rs}(t)(d\phi_{\exp(tw_{ij})})_o(e^{-t\cdot \ad(w_{ij})}w_{rs})^*(o),
	\end{array}$
\end{center}
and 
\begin{center}
	$\displaystyle\left.\frac{d}{dt}\exp(tw_{ij})K_{\emptyset}=\frac{d}{ds}\exp((t+s)w_{ij})K_{\emptyset}\ \right|_{s=0}=\left.\frac{d}{ds}\exp(tw_{ij})\exp(sw_{ij})K_{\emptyset}\ \right|_{s=0}=(d\phi_{\exp(tw_{ij})})_ow_{ij}^*(o),$
\end{center}
thus
\begin{center}
	$\begin{array}{ccl}
	\displaystyle\nabla_{\dot{\zeta}}\gamma&=&\displaystyle\nabla_{\dot{\zeta}}\left(\sum\limits_{r>s}x_{rs}(t)(d\phi_{\exp(tw_{ij})})_o(e^{-t\cdot \ad(w_{ij})}w_{rs})^*(o)\right)\\
	\\
	&=&\displaystyle\sum\limits_{r>s}x'_{rs}(t)(d\phi_{\exp(tw_{ij})})_o(e^{-t\cdot \ad(w_{ij})}w_{rs})^*(o)+\sum\limits_{r>s}x_{rs}(t)\nabla_{\dot{\zeta}}(d\phi_{\exp(tw_{ij})})_o(e^{-t\cdot \ad(w_{ij})}w_{rs})^*(o)\\
	\\
	&=&\displaystyle\sum\limits_{r>s}x'_{rs}(t)(d\phi_{\exp(tw_{ij})})_o(e^{-t\cdot \ad(w_{ij})}w_{rs})^*(o)\\
	\\
	&&+\displaystyle\sum\limits_{r>s}x_{rs}(t)\nabla_{(d\phi_{\exp(tw_{ij})})_ow_{ij}^*}(d\phi_{\exp(tw_{ij})})_o(e^{-t\cdot \ad(w_{ij})}w_{rs})^*(o)\\
	\\
	&=&\displaystyle(d\phi_{\exp(tw_{ij})})_o\left(\sum\limits_{r>s}x'_{rs}(t)(e^{-t\cdot \ad(w_{ij})}w_{rs})^*(o)+\sum\limits_{r>s}x_{rs}(t)\nabla_{w_{ij}^*}(e^{-t\cdot \ad(w_{ij})}w_{rs})^*(o)\right).
	\end{array}$
\end{center}
Therefore, $\gamma$ is parallel along $\exp(tw_{ij})K_{\emptyset}$ if and only if 
\begin{equation}\label{1}
(d\phi_{\exp(tw_{ij})})_o^{-1}(\nabla_{\dot{\zeta}}\gamma)=\sum\limits_{r>s}x'_{rs}(t)(e^{-t\cdot \ad(w_{ij})}w_{rs})^*(o)+\sum\limits_{r>s}x_{rs}(t)\nabla_{w_{ij}^*}(e^{-t\cdot \ad(w_{ij})}w_{rs})^*(o)=0.
\end{equation} 
We recall that for $X,Y\in\mathfrak{so}(n)$, $\nabla_{X^*}Y^*(o)=(-\frac{1}{2}[X,Y]+U(X,Y))^*(o)$ (\cite{Bes}), where the operator  $U:\mathfrak{so}(n)\times\mathfrak{so}(n)\rightarrow\mathfrak{so}(n)$ is given by the equality 
\begin{equation}\label{2}
	2g(U(X,Y),Z)=g([Z,X],Y)+g([Z,Y],X),\  Z\in\mathfrak{so}(n).
\end{equation}
By using the relations
\begin{align*}
	&e^{-t\cdot \ad(w_{ij})}w_{rs}=w_{rs} \text{ if } (r,s)=(i,j) \text{ or } r,s\notin\{i,j\},\\
	&e^{-t\cdot \ad(w_{ij})}w_{is}=\cos(t)w_{is}+\sin(t)w_{js},\  s\neq j,\ s>i,\\
	&e^{-t\cdot \ad(w_{ij})}w_{js}=\cos(t)w_{js}-\sin(t)w_{is},\ s<j,\\
	&e^{-t\cdot \ad(w_{ij})}w_{si}=\cos(t)w_{si}+\sin(t)w_{sj},\ s>i,\\
	&e^{-t\cdot \ad(w_{ij})}w_{sj}=\cos(t)w_{sj}-\sin(t)w_{si},\ s\neq i,\ s>j,\\
	&U(w_{ij},w_{rs})=0 \text{ if } (r,s)=(i,j) \text{ or } r,s\notin\{i,j\}\\
	&U(w_{ij},w_{is})=\displaystyle\frac{\mu_{ij}-\mu_{is}}{2}w_{js},\ s\neq j,\ s>i,\\
	&U(w_{ij},w_{js})=\displaystyle\frac{\mu_{js}-\mu_{ij}}{2}w_{is},\ s<j,\\
	&U(w_{ij},w_{si})=\displaystyle\frac{\mu_{ij}-\mu_{si}}{2}w_{sj},\ s>i,\\
	&U(w_{ij},w_{sj})=\displaystyle\frac{\mu_{sj}-\mu_{ij}}{2}w_{si},\ s\neq i,\ s>j,\\
	\end{align*}
we have that $(d\phi_{\exp(tw_{ij})})_o^{-1}(\nabla_{\dot{\zeta}}\gamma)$ is equal to
\begin{align*}
	&\displaystyle x'_{ij}(t)w_{ij}^*(o)+\sum\limits_{\begin{subarray}{c}r>s\\r,s\notin\{i,j\}\end{subarray}}x'_{rs}(t)w_{rs}^*(o)\\
	&+\sum\limits_{\begin{subarray}{c}s<i\\s\neq j\end{subarray}}x'_{is}(t)(\cos(t)w_{is}^*+\sin(t)w_{js}^*)(o)+\sum\limits_{s<j}x'_{js}(t)(\cos(t)w_{js}^*-\sin(t)w_{is}^*)(o)\\
	&+\sum\limits_{s>i}x'_{si}(t)(\cos(t)w_{si}^*+\sin(t)w_{sj}^*)(o)+\sum\limits_{\begin{subarray}{c}s>j\\s\neq i\end{subarray}}x'_{sj}(t)(\cos(t)w_{sj}^*-\sin(t)w_{si}^*)(o)\\
	&+\sum\limits_{\begin{subarray}{c}s<i\\s\neq j\end{subarray}}\left\{x_{is}(t)(-\sin(t))w_{is}^*(o)+x_{is}(t)\cos(t)\left(-\frac{1}{2}[w_{ij},w_{is}]+U(w_{ij},w_{is})\right)^*(o)\right\}\\
	&+\sum\limits_{\begin{subarray}{c}s<i\\s\neq j\end{subarray}}\left\{x_{is}(t)\cos(t)w_{js}^*(o)+x_{is}(t)\sin(t)\left(-\frac{1}{2}[w_{ij},w_{js}]+U(w_{ij},w_{js})\right)^*(o)\right\}\\
	&+\sum\limits_{s<j}\left\{x_{js}(t)(-\sin(t))w_{js}^*(o)+x_{js}(t)\cos(t)\left(-\frac{1}{2}[w_{ij},w_{js}]+U(w_{ij},w_{js})\right)^*(o)\right\}\\
	&-\sum\limits_{s<j}\left\{x_{js}(t)\cos(t)w_{is}^*(o)+x_{js}(t)\sin(t)\left(-\frac{1}{2}[w_{ij},w_{is}]+U(w_{ij},w_{is})\right)^*(o)\right\}\\
	&+\sum\limits_{s>i}\left\{x_{si}(t)(-\sin(t))w_{si}^*(o)+x_{si}(t)\cos(t)\left(-\frac{1}{2}[w_{ij},w_{si}]+U(w_{ij},w_{si})\right)^*(o)\right\}\\
	&+\sum\limits_{s<i}\left\{x_{si}(t)\cos(t)w_{sj}^*(o)+x_{si}(t)\sin(t)\left(-\frac{1}{2}[w_{ij},w_{sj}]+U(w_{ij},w_{sj})\right)^*(o)\right\}\\
	&+\sum\limits_{\begin{subarray}{c}s>j\\s\neq i\end{subarray}}\left\{x_{sj}(t)(-\sin(t))w_{sj}^*(o)+x_{sj}(t)\cos(t)\left(-\frac{1}{2}[w_{ij},w_{sj}]+U(w_{ij},w_{sj})\right)^*(o)\right\}\\
	&-\sum\limits_{\begin{subarray}{c}s>j\\s\neq i\end{subarray}}\left\{x_{sj}(t)\cos(t)w_{si}^*(o)+x_{sj}(t)\sin(t)\left(-\frac{1}{2}[w_{ij},w_{si}]+U(w_{ij},w_{si})\right)^*(o)\right\}\\
   &=\displaystyle x'_{ij}(t)w_{ij}^*(o)+\sum\limits_{\begin{subarray}{c}r>s\\r,s\notin\{i,j\}\end{subarray}}x'_{rs}(t)w_{rs}^*(o)\\
	&+\sum\limits_{j<s<i}x'_{is}(t)\cos(t)w_{is}^*(o)+\sum\limits_{s<j}x'_{is}(t)\cos(t)w_{is}^*(o)-\sum\limits_{j<s<i}x'_{is}(t)\sin(t)w_{sj}^*(o)\\
	&+\sum\limits_{s<j}x'_{is}(t)\sin(t)w_{js}^*(o)+\sum\limits_{s<j}x'_{js}(t)\cos(t)w_{js}^*(o)-\sum\limits_{s<j}x'_{js}(t)\sin(t)w_{is}^*(o)\\
	&+\sum\limits_{s>i}x'_{si}(t)\cos(t)w_{si}^*(o)+\sum\limits_{s>i}x'_{si}(t)\sin(t)w_{sj}^*(o)+\sum\limits_{s>i}x'_{sj}(t)\cos(t)w_{sj}^*(o)\\
	&+\sum\limits_{j<s<i}x'_{sj}(t)\cos(t)w_{sj}^*(o)-\sum\limits_{s>i}x'_{sj}(t)\sin(t)w_{si}^*(o)+\sum\limits_{j<s<i}x'_{sj}(t)\sin(t)w_{is}^*(o)\\
	&-\sum\limits_{j<s<i}x_{is}(t)\sin(t)w_{is}^*(o)-\sum\limits_{s<j}x_{is}(t)\sin(t)w_{is}^*(o)-\sum\limits_{j<s<i}x_{is}(t)\cos(t)\left(\frac{1+\mu_{ij}-\mu_{is}}{2}\right)w_{sj}^*(o)\\
	&+\sum\limits_{s<j}x_{is}(t)\cos(t)\left(\frac{1+\mu_{ij}-\mu_{is}}{2}\right)w_{js}^*(o)-\sum\limits_{j<s<i}x_{is}(t)\cos(t)w_{sj}^*(o)+\sum\limits_{s<j}x_{is}(t)\cos(t)w_{js}^*(o)\\
	&-\sum\limits_{j<s<i}x_{is}(t)\sin(t)\left(\frac{1+\mu_{ij}-\mu_{sj}}{2}\right)w_{is}^*(o)-\sum\limits_{s<j}x_{is}(t)\sin(t)\left(\frac{1+\mu_{ij}-\mu_{js}}{2}\right)w_{is}^*(o)\\
	&-\sum\limits_{s<j}x_{js}(t)\sin(t)w_{js}^*(o)-\sum\limits_{s<j}x_{js}(t)\cos(t)\left(\frac{1+\mu_{ij}-\mu_{js}}{2}\right)w_{is}^*(o)-\sum\limits_{s<j}x_{js}(t)\cos(t)w_{is}^*(o)\\
	&-\sum\limits_{s<j}x_{js}(t)\sin(t)\left(\frac{1+\mu_{ij}-\mu_{is}}{2}\right)w_{js}^*(o)-\sum\limits_{s>i}x_{si}(t)w_{si}^*(o)+\sum\limits_{s>i}x_{si}(t)\cos(t)\left(\frac{1+\mu_{ij}-\mu_{si}}{2}\right)w_{sj}^*(o)\\
	&+\sum\limits_{s>i}x_{si}(t)\cos(t)w_{sj}^*(o)-\sum\limits_{s>i}x_{si}(t)\sin(t)\left(\frac{1+\mu_{ij}-\mu_{sj}}{2}\right)w_{si}^*(o)-\sum\limits_{j<s<i}x_{sj}(t)\sin(t)w_{sj}^*(o)\\
	&-\sum\limits_{s>i}x_{sj}(t)\sin(t)w_{sj}^*(o)-\sum\limits_{s>i}x_{sj}(t)\cos(t)\left(\frac{1+\mu_{ij}-\mu_{sj}}{2}\right)w_{si}^*(o)\\
	&+\sum\limits_{j<s<i}x_{sj}(t)\cos(t)\left(\frac{1+\mu_{ij}-\mu_{sj}}{2}\right)w_{is}^*(o)+\sum\limits_{j<s<i}x_{sj}(t)\cos(t)w_{is}^*(o)-\sum\limits_{s>i}x_{sj}(t)\cos(t)w_{si}^*(o)\\
	&-\sum\limits_{s>i}x_{sj}(t)\sin(t)\left(\frac{1+\mu_{ij}-\mu_{si}}{2}\right)w_{sj}^*(o)-\sum\limits_{j<s<i}x_{sj}(t)\sin(t)\left(\frac{1+\mu_{ij}-\mu_{is}}{2}\right)w_{sj}^*(o)\\
	&=\displaystyle x'_{ij}(t)w_{ij}^*(o)+\sum\limits_{\begin{subarray}{c}r>s\\r,s\notin\{i,j\}\end{subarray}}x'_{rs}(t)w_{rs}^*(o)\\
	&+\sum\limits_{j<s<i}\left\{x'_{is}(t)\cos(t)+x'_{sj}(t)\sin(t)+\left(\frac{3+\mu_{ij}-\mu_{sj}}{2}\right)(-x_{is}(t)\sin(t)+x_{sj}(t)\cos(t))\right\}w_{is}^*(o)\\
	&+\sum\limits_{j<s<i}\left\{-x'_{is}(t)\sin(t)+x'_{sj}(t)\cos(t)+\left(\frac{3+\mu_{ij}-\mu_{is}}{2}\right)(-x_{is}(t)\cos(t)-x_{sj}(t)\sin(t))\right\}w_{sj}^*(o)\\
	&+\sum\limits_{s<j}\left\{x'_{is}(t)\cos(t)-x'_{js}(t)\sin(t)+\left(\frac{3+\mu_{ij}-\mu_{js}}{2}\right)(-x_{is}(t)\sin(t)-x_{js}(t)\cos(t))\right\}w_{is}^*(o)\\
	&+\sum\limits_{s<j}\left\{x'_{is}(t)\sin(t)+x'_{js}(t)\cos(t)+\left(\frac{3+\mu_{ij}-\mu_{is}}{2}\right)(x_{is}(t)\cos(t)-x_{js}(t)\sin(t))\right\}w_{js}^*(o)\\
	&+\sum\limits_{s>i}\left\{x'_{si}(t)\cos(t)-x'_{sj}(t)\sin(t)+\left(\frac{3+\mu_{ij}-\mu_{sj}}{2}\right)(-x_{si}(t)\sin(t)-x_{sj}(t)\cos(t))\right\}w_{si}^*(o)\\
	&+\sum\limits_{s>i}\left\{x'_{si}(t)\sin(t)+x'_{sj}(t)\cos(t)+\left(\frac{3+\mu_{ij}-\mu_{si}}{2}\right)(x_{si}(t)\cos(t)-x_{sj}(t)\sin(t))\right\}w_{sj}^*(o),
\end{align*}
so we have the result.
\end{proof}

\begin{ex}[Family of horizontal geodesics I]
Let us fix $w_{ij}\in T_o\mathbb{F}$ and fix $s$ such that $j<s<i$. We will compute the functions $x_{is}$ and $x_{sj}$ by solving the system of ODEs (\ref{system6}) and consequently determine the components of the vector field along the curve $\alpha(t)=\exp(t w_{ij})$, with $\alpha^\prime (0)=aw_{is}+bw_{sj}$. By solving the system (\ref{system6}) we have
$$
x_{is}(t)=a \cos\left( \frac{3+\mu_{is}-\mu_{sj}}{2} t \right) - b\sin \left( \frac{3+\mu_{is}-\mu_{sj}}{2} t \right), 
$$
$$
x_{sj}(t)=b \cos\left( \frac{3+\mu_{is}-\mu_{sj}}{2} t \right) + a\sin \left( \frac{3+\mu_{is}-\mu_{sj}}{2} t \right),
$$
where $\mu_{is}$ and $\mu_{sj}$ are the components of the metric. It is worth to point out that the trajectory of solution $(x_{is}(t),x_{sj}(t))$ is contained in an ellipse in the plane spanned by $w_{is}$ and $w_{sj}$. In the next Section we will provide an explicitly low-dimensional example of this phenomena. Similar computation holds for Equations (\ref{edo2}) and (\ref{edo3}). 
\end{ex}

\begin{ex}[Family of horizontal geodesics II]
The solution for the Equation (\ref{system5}) has a nice solution in therm of the Lie algebra structure of type $\mathcal{A}_\ell$. Let us assume that the vector $w_{ij}$ is the generator of the root space associated to the root $\alpha$ and $w_{rs}$ is associated to the root $\beta$ (after an appropriated normalization in both cases). If we impose the condition that $\alpha$ and $\beta$ are {\em orthogonal} roots, that is $\alpha \pm \beta$ is not a root, this mean in the Lie algebra of type $\mathcal{A}$ that $r,s\notin\{i,j\}$. Therefore if we consider the vector $v=w_{ij}+w_{rs}$ associate to orthogonal roots and take the parallel transport of $v$ along the curve $\exp(t w_{ij})$ we will produce an horizontal geodesics.

\end{ex}
\begin{prop}
	The curve
\begin{center} $\gamma(t)=x_{ij}(t)(d\phi_{\exp(tw_{ij})})_ow_{ij}^*(o)+\sum\limits_{r,s\notin\{i,j\}}x_{rs}(t)(d\phi_{\exp(tw_{ij})})_ow_{rs}^*(o)$
\end{center}
where $(x_{uv})$ satisfies the system of differential equations
\begin{equation}\label{8}
\displaystyle x_{uv}''=-\left(\mu_{ij}(x_{ij}')^2+\sum\limits_{r,s\notin\{i,j\}}\mu_{rs}(x_{rs}')^2\right)x_{uv},\ (u,v)=(i,j)\ \text{or}\ u,v\notin\{i,j\}
\end{equation}
is an oblique geodesic.
\end{prop}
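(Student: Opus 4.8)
The plan is to verify directly that $\gamma$ satisfies both equations of the geodesic system \eqref{3} with $\zeta(t)=\exp(tw_{ij})K_{\emptyset}$. The starting observation is that for $\mathbb{F}_\emptyset$ the isotropy algebra $\mathfrak{k}_\emptyset$ is trivial, so $\mathfrak{m}=\mathfrak{so}(n)$ and every bracket below is a genuine Lie bracket in $\mathfrak{so}(n)$. Since the list of relations in the proof of Proposition \ref{C3.2} gives $U(w_{ij},w_{ij})=0$, equivariance of $\nabla$ yields $\nabla_{\dot{\zeta}}\dot{\zeta}=(d\phi_{\exp(tw_{ij})})_o\,U(w_{ij},w_{ij})^*(o)=0$; hence $\zeta$ is a geodesic of $\mathbb{F}_\emptyset$ and the first equation of \eqref{3} collapses to the curvature identity $R(\gamma,\nabla_{\dot{\zeta}}\gamma)\dot{\zeta}=0$.

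Next I would compute $\nabla_{\dot{\zeta}}\gamma$ exactly as in the proof of Proposition \ref{C3.2}. The crucial simplification is that the indices appearing in $\gamma$ are precisely those for which $e^{-t\cdot\ad(w_{ij})}w_{uv}=w_{uv}$ and both $[w_{ij},w_{uv}]=0$ and $U(w_{ij},w_{uv})=0$; consequently $\nabla_{w_{ij}^*}w_{uv}^*(o)=0$ for $(u,v)=(i,j)$ and for $u,v\notin\{i,j\}$. Substituting this into the differentiation formula gives $\nabla_{\dot{\zeta}}\gamma=(d\phi_{\exp(tw_{ij})})_o\sum x'_{uv}(t)w_{uv}^*(o)$, and since this field has the same structural form, a second application yields $\nabla_{\dot{\zeta}}\nabla_{\dot{\zeta}}\gamma=(d\phi_{\exp(tw_{ij})})_o\sum x''_{uv}(t)w_{uv}^*(o)$. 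Because $(d\phi_{\exp(tw_{ij})})_o$ is an isometry and the metric is diagonal \eqref{A3}, one obtains $g(\nabla_{\dot{\zeta}}\gamma,\nabla_{\dot{\zeta}}\gamma)=\mu_{ij}(x'_{ij})^2+\sum_{r,s\notin\{i,j\}}\mu_{rs}(x'_{rs})^2$. Comparing coefficients of the linearly independent vectors $(d\phi_{\exp(tw_{ij})})_o w_{uv}^*(o)$ then shows that the second equation of \eqref{3} is equivalent, term by term, to the system \eqref{8}.

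It remains to establish $R(\gamma,\nabla_{\dot{\zeta}}\gamma)\dot{\zeta}=0$. By invariance of the curvature under the isometry $(d\phi_{\exp(tw_{ij})})_o$, and since (after pullback to the origin) $\gamma$ and $\nabla_{\dot{\zeta}}\gamma$ take values in $V:=\mathbb{R}w_{ij}\oplus\mathrm{span}\{w_{rs}:r,s\notin\{i,j\}\}$ while $\dot\zeta$ pulls back to $w_{ij}$, it suffices to prove $R(X,Y)w_{ij}=0$ at the origin for all $X,Y\in V$. The key structural fact is that $V$ is a subalgebra isomorphic to $\mathfrak{so}(2)\oplus\mathfrak{so}(n-2)$ in which $w_{ij}$ is central: indeed $[w_{ij},V]=0$ and $[V,V]\subseteq\mathrm{span}\{w_{rs}:r,s\notin\{i,j\}\}\subseteq V$. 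Writing the invariant-metric curvature formula as $R(X,Y)Z=\Lambda_X\Lambda_Y Z-\Lambda_Y\Lambda_X Z-\Lambda_{[X,Y]}Z$ with $\Lambda_X Z=-\tfrac12[X,Z]+U(X,Z)$ (the precise normalization is immaterial here since each summand will vanish separately), and using $[Y,w_{ij}]=0$ together with $U(Y,w_{ij})=0$ (symmetry of $U$ and the relations from Proposition \ref{C3.2}), one gets $\Lambda_Y w_{ij}=0$ for every $Y\in V$; likewise $\Lambda_{[X,Y]}w_{ij}=0$ because $[X,Y]\in V$. Hence all three terms vanish and $R(X,Y)w_{ij}=0$.

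Combining the two computations, $\gamma$ satisfies \eqref{3} if and only if $(x_{uv})$ solves \eqref{8}, so every such $\gamma$ is a geodesic of the Sasaki metric. It is oblique because $\zeta$ is non-constant (ruling out vertical) while for any non-constant solution of \eqref{8} one has $\nabla_{\dot{\zeta}}\gamma\neq 0$ (ruling out horizontal). I expect the only genuinely non-routine step to be the curvature identity $R(X,Y)w_{ij}=0$; everything there hinges on recognizing that the directions entering $\gamma$ assemble into the subalgebra $\mathfrak{so}(2)\oplus\mathfrak{so}(n-2)$ with $w_{ij}$ central, which forces both $\ad(w_{ij})$ and $U(\cdot,w_{ij})$ to annihilate $V$.
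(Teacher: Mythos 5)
Your proposal is correct and takes essentially the same approach as the paper's proof: both verify the two Sasaki geodesic equations by computing $\nabla_{\dot{\zeta}}\gamma$ and $\nabla_{\dot{\zeta}}\nabla_{\dot{\zeta}}\gamma$ from the bracket and $U$-relations listed in the proof of Proposition \ref{C3.2}, showing that $R(\gamma,\nabla_{\dot{\zeta}}\gamma)\dot{\zeta}=0$ because every direction occurring in $\gamma$ commutes with $w_{ij}$ and has vanishing $U$-product with it, and matching coefficients to recover the stated ODE system. Your write-up is in fact slightly more complete than the paper's, which leaves implicit both the verification $\nabla_{\dot{\zeta}}\dot{\zeta}=0$ and the argument that the geodesic is oblique rather than horizontal.
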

\begin{proof} Let $\zeta=\exp(tw_{ij})\cdot o$. As in the proof of Proposition 2.2 we have that 
\begin{center}
	$\displaystyle \nabla_{\dot{\zeta}}\gamma=(d\phi_{\exp(tw_{ij})})_o\left(x_{ij}'(t)w_{ij}^*(o)+\sum\limits_{r,s\notin\{i,j\}}x_{rs}'(t)w_{rs}^*(o)\right).$
\end{center}
Since $[\gamma,\nabla_{\dot{\zeta}}\gamma]=\sum\limits_{r,s\notin\{i,j\}}f_{rs}(t)w_{rs}^*(o)$ for some functions $f_{rs}$, then 
\begin{center}
	$R(\gamma,\nabla_{\dot{\zeta}}\gamma)=\nabla_{\gamma}\nabla_{\nabla_{\dot{\zeta}}\gamma}\dot{\zeta}-\nabla_{\nabla_{\dot{\zeta}}\gamma}\nabla_{\gamma}\dot{\zeta}-\nabla_{[\gamma,\nabla_{\dot{\zeta}}\gamma]}\dot{\zeta}=0.$
\end{center}
Observe that 
\begin{center}
$\nabla_{\dot{\zeta}}\nabla_{\dot{\zeta}}\gamma=(d\phi_{\exp(tw_{ij})})_o\left(x_{ij}''(t)w_{ij}^*(o)+\sum\limits_{r,s\notin\{i,j\}}x_{rs}''(t)w_{rs}^*(o)\right)$
\end{center}
and 
\begin{center}
	$g(\nabla_{\dot{\zeta}}\gamma,\nabla_{\dot{\zeta}}\gamma)=\left(\mu_{ij}(x_{ij}')^2+\sum\limits_{r,s\notin\{i,j\}}\mu_{rs}(x_{rs}')^2\right)$,
\end{center}
therefore, by \eqref{8} we have that $\nabla_{\dot{\zeta}}\nabla_{\dot{\zeta}}\gamma=-g(\nabla_{\dot{\zeta}}\gamma,\nabla_{\dot{\zeta}}\gamma)\gamma$. Hence $\gamma$ is a geodesic.
\end{proof}

\begin{ex}
Fix $w_{ij}\in T_o\mathbb{F}$. We are looking for the component $x_{ij}$ of the vector field along the curve $\alpha(t)=\exp (tw_{ij})$ that are oblique geodesics. In this case the differential equation reads 
\begin{equation}
x_{ij}^{\prime\prime}=-\mu_{ij}x_{ij}(x_{ij}^\prime)^2,
\end{equation}
where $\mu_{ij}$ is the parameter of the metric in the direction $w_{ij}$. Dividing by $x_{ij}^\prime$ we obtain an exact differential equation
\begin{equation}\label{eq-dif01}
\ln |x_{ij}^\prime|=\int{- \mu_{ij}x_{ij}\,\, dx_{ij}} + C_1.  
\end{equation}
Solving the Equation \ref{eq-dif01} for $x_{ij}^\prime$ we obtain 
\begin{equation}
x_{ij}(t)=\frac{\sqrt{2}}{\sqrt{\mu_{ij}}}\sum_{k=0}^\infty{\frac{(-1)^k c_k}{2k+1}\left(\frac{\sqrt{2\mu_{ij}}}{2} e^{C_1}t+C_2\right)},
\end{equation}
where $C_1=x(0)$, $C_2=x^\prime (0)$ (determined by initial velocity $w_{ij}$) and the numbers $c_k$ are given by 
$$
c_k=\sum_{m=0}^{k-1}{\frac{c_m c_{k-1-m}}{(m+1)(2m+1)}}.
$$
In the next section we will provide low-dimensional explicit examples of this kind of solutions in terms of elementary functions. 
\end{ex}
\section{The tangent bundle of $\mathbb{R}P^1$}

For $n=2$, the flag $\mathbb{F}_\emptyset=SO(2)/S(O(1)\times O(1))$ is diffeomorphic to the real projective space $\mathbb{R}P^1$ which, in turn, is diffeomorphic to $\mathbb{S}^1.$ In this case, we have an identification between $T\mathbb{S}^1$ and the one-sheeted hyperboloid $\mathcal{H}=\{(x,y,z)\in\mathbb{R}^3:\ x^2+y^2-z^2=1\}$; this will help us to find explicitly the geodesics on the tanget bundle. We provide $SO(2)/S(O(1)\times O(1))$ with the invariant metric $g$ given by  $g(w_{21},w_{21})=\mu$, for some $\mu>0$ (see \eqref{A3}).

\begin{prop}The map 
	\begin{equation*}
	\begin{array}{rccc}
	F:& \displaystyle\frac{SO(2)}{S(O(1)\times O(1))} & \longrightarrow & \mathbb{S}^1
	\end{array};\ \left(\begin{array}{cc}
	a & b\\
	c & d
	\end{array}\right)S(O(1)\times O(1)) \longmapsto (a^2-c^2,2ac)
	\end{equation*}
is a diffeomorphism such that $(F^{-1})^*g=h$ where $h=\displaystyle\left.\frac{\mu}{4}(dx^2+dy^2)\right|_{\mathbb{S}^1}$.
\end{prop}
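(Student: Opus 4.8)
The plan is to trivialize everything through the rotation parametrization of $SO(2)$, turning the statement into a one-dimensional computation in which the normalization $\mu/4$ emerges from an angle-doubling. Write a general element of $SO(2)$ as the rotation $R_\theta=\left(\begin{smallmatrix}\cos\theta & -\sin\theta\\ \sin\theta & \cos\theta\end{smallmatrix}\right)$, and note that $S(O(1)\times O(1))=\{\diag(\pm 1,\pm 1):\det=1\}=\{I,-I\}=\{R_0,R_\pi\}$. Hence the coset of $R_\theta$ is $\{R_\theta,-R_\theta\}=\{R_\theta,R_{\theta+\pi}\}$, and in the notation of the statement $R_\theta$ has $a=\cos\theta$, $c=\sin\theta$, so $F(R_\theta K_\emptyset)=(\cos 2\theta,\sin 2\theta)$. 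Well-definedness is then immediate, since replacing $(a,c)$ by $(-a,-c)$ leaves both $a^2-c^2$ and $2ac$ unchanged.

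Next I would verify that $F$ is a diffeomorphism. In the angle coordinate $F$ reads $\theta\mapsto 2\theta$, which is a diffeomorphism $\mathbb{R}/\pi\mathbb{Z}\to\mathbb{R}/2\pi\mathbb{Z}$; concretely, as $\theta$ runs over a fundamental domain $[0,\pi)$ for $SO(2)/\{\pm I\}$ the angle $2\theta$ runs once around $[0,2\pi)$, so $F$ is a smooth bijection onto $\mathbb{S}^1$. Since $F$ is polynomial in the matrix entries and its coordinate expression has nonvanishing derivative, both $F$ and $F^{-1}$ are smooth.

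For the metric identity I would evaluate both sides on the velocity field of the curve $\theta\mapsto R_\theta\cdot o$, which I denote $\partial_\theta$. At the origin this velocity is $w_{21}^*(o)$, identified with $w_{21}\in\mathfrak{m}$, so its squared $g$-length is $g(w_{21},w_{21})=\mu$; because $g$ is $SO(2)$-invariant and left translation by $R_{\theta_0}$ carries the curve point at $\theta$ to the one at $\theta+\theta_0$, the field $\partial_\theta$ has constant $g$-norm $g(\partial_\theta,\partial_\theta)=\mu$. On the target, writing points of $\mathbb{S}^1$ as $(\cos\phi,\sin\phi)$ gives $(dx^2+dy^2)|_{\mathbb{S}^1}=d\phi^2$ and hence $h(\partial_\phi,\partial_\phi)=\mu/4$. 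As $F$ reads $\phi=2\theta$ we have $F_*\partial_\theta=2\partial_\phi$, so $(F^*h)(\partial_\theta,\partial_\theta)=4\cdot\frac{\mu}{4}=\mu=g(\partial_\theta,\partial_\theta)$. Because a Riemannian metric on a $1$-manifold is determined by its value on a single nonvanishing field, this yields $F^*h=g$, equivalently $(F^{-1})^*g=h$.

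The whole argument is elementary; the only delicate point is the bookkeeping around the quotient, namely recognizing $S(O(1)\times O(1))=\{\pm I\}$ so that $F$ really is the angle-doubling identification $\mathbb{R}P^1\cong\mathbb{S}^1$, and then tracking the resulting factor of $2$ (squared to $4$) that forces the circle normalization $\mu/4$. No genuine obstacle is anticipated.
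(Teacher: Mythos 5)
Your proof is correct, and it diverges from the paper's at both of its two steps. For the diffeomorphism property, the paper does not use the angle-doubling picture at all: it verifies well-definedness exactly as you do, but then establishes smoothness of the inverse by exhibiting an explicit map $G:\mathbb{S}^1\to SO(2)/S(O(1)\times O(1))$, written out in two overlapping formulas (one valid away from $(-1,0)$, one valid away from $(1,0)$) so that each is visibly smooth on its domain. Your argument---$F$ descends from a polynomial map through the covering projection, reads $\theta\mapsto 2\theta$ in the angle coordinate on $SO(2)/\{\pm I\}\cong\mathbb{R}/\pi\mathbb{Z}$, and is a bijective local diffeomorphism---reaches the same conclusion with essentially no computation, at the cost of setting up that quotient identification carefully (which you do). For the metric identity, the two proofs share the same core idea, namely evaluating the pullback on the nonvanishing field $w_{21}^*$ and exploiting one-dimensionality, but they execute it differently: the paper computes $(dF)_{Q\cdot o}\bigl(w_{21}^*(Q\cdot o)\bigr)=-4ac\,\partial_x+2(a^2-c^2)\,\partial_y$ at an \emph{arbitrary} point in matrix entries and then checks $\frac{\mu}{4}\bigl(16a^2c^2+4(a^2-c^2)^2\bigr)=\frac{\mu}{4}\cdot 4(a^2+c^2)^2=\mu$ directly, whereas you compute only at the origin and propagate along the orbit by the $SO(2)$-invariance of $g$ together with the coordinate identity $F_*\partial_\theta=2\partial_\phi$. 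Both are complete; the paper's version buys explicit formulas for $F^{-1}$ and $dF$ in ambient coordinates, which fit the very concrete style of the rest of that section (where $dF$ reappears in identifying the Sasaki metric on the hyperboloid), while yours buys brevity and makes the normalization $\mu/4$ conceptually transparent as the inverse square of the angle-doubling derivative.
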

\begin{proof} First, observe that $F$ is well-defined since
	\begin{center}
		$\begin{array}{ccl}
		\left(\begin{array}{cc}
		a & b\\
		c & d
		\end{array}\right)S(O(1)\times O(1))=\left(\begin{array}{cc}
		a' & b'\\
		c' & d'
		\end{array}\right)S(O(1)\times O(1))&\Longrightarrow& \left(\begin{array}{cc}
		a & b\\
		c & d
		\end{array}\right)=\pm \left(\begin{array}{cc}
		a' & b'\\
		c' & d'
		\end{array}\right)\\
		\\
		&\Longrightarrow& (a^2-c^2,2ac)=((a')^2-(c')^2,2a'c')
		\end{array}$
	\end{center}                                                                              
and $(a^2-c^2)^2+(2ac)^2=(a^2+c^2)^2=1$ since $(a,c)^T$ is a column of an orthogonal matrix. It is easy to verify that the map 
\begin{equation*}
G:\mathbb{S}^1\longrightarrow SO(2)/S(O(1)\times O(1))
\end{equation*}
defined by
\begin{equation*} 
G(x,y)=\left\{\begin{array}{ll} \left(\begin{array}{cc}
\frac{x+1}{\sqrt{(x+1)^2+y^2}} & -\frac{y}{\sqrt{(x+1)^2+y^2}}\\ 
\frac{y}{\sqrt{(x+1)^2+y^2}} & \frac{x+1}{\sqrt{(x+1)^2+y^2}}
\end{array}\right)S(O(1)\times O(1)),& \text{if}\ (x,y)\neq(-1,0)\\
\\
\left(\begin{array}{cc}
0 & -1\\ 
1 & 0
\end{array}\right)S(O(1)\times O(1)),& \text{if}\ (x,y)=(-1,0)\\
\end{array}\right.
\end{equation*}
the inverse of $F$. Smoothness of $G$ follows from the fact that it can be rewritten as
\begin{equation*} 
G(x,y)=\left\{\begin{array}{ll} \left(\begin{array}{cc}
\frac{y}{\sqrt{(1-x)^2+y^2}} & -\frac{1-x}{\sqrt{(1-x)^2+y^2}}\\ 
\frac{1-x}{\sqrt{(1-x)^2+y^2}} & \frac{y}{\sqrt{(1-x)^2+y^2}}
\end{array}\right)S(O(1)\times O(1)),& \text{if}\ (x,y)\neq(1,0)\\
\\
\left(\begin{array}{cc}
1 & 0\\ 
0 & 1
\end{array}\right)S(O(1)\times O(1)),& \text{if}\ (x,y)=(1,0)\\
\end{array}\right..
\end{equation*}
For the second part, note that given $Q=\left(\begin{array}{cc}
a & b\\ 
c & d
\end{array}\right)\in SO(2),$ we have
\begin{equation*}
\begin{array}{ccl}
(dF)_{Q\cdot o}(w_{21}^*(Q\cdot o)) &=&\displaystyle(dF)_{Q\cdot o}\left(\left.\frac{d}{dt}\exp(tw_{21})\cdot(Q\cdot o)\right|_{t=0}\right)\\
\\
&=&\displaystyle(dF)_{Q\cdot o}\left(\left.\frac{d}{dt}\left(\begin{array}{cc}
\cos(t) & -\sin(t)\\ 
\sin(t) & \cos(t)
\end{array}\right)\left(\begin{array}{cc}
a & b\\ 
c & d
\end{array}\right)\cdot o\ \right|_{t=0}\right)\\
\\
&=&\displaystyle\left.\frac{d}{dt}F\left(\left(\begin{array}{cc}
a\cdot \cos(t)-c\cdot \sin(t) & b\cdot \cos(t)-d\cdot \sin(t)\\ 
a\cdot \sin(t)+c\cdot \cos(t) & b\cdot \sin(t)+d\cdot \cos(t)
\end{array}\right)\cdot o\right)\right|_{t=0}\\
\\
&=&\displaystyle-4ac\left.\frac{\partial}{\partial x}\right|_{F(Q\cdot o)}+2(a^2-c^2)\left.\frac{\partial}{\partial y}\right|_{F(Q\cdot o)},\\
\end{array}
\end{equation*} 
therefore

\begin{equation*}
\begin{array}{ccl}
(F^*h)_{Q\cdot o}(w_{21}^*(Q\cdot o),w_{21}^*(Q\cdot o)) &=&\displaystyle h_{F(Q\cdot o)}((dF)_{Q\cdot o}(w_{21}^*(Q\cdot o)),(dF)_{Q\cdot o}(w_{21}^*(Q\cdot o))\\
\\
&=&\displaystyle \frac{\mu}{4}((-4ac)^2+(2(a^2-c^2))^2)\\
\\
&=&\displaystyle \frac{\mu}{4}(4(a^2+c^2)^2)\\
\\
&=&\mu,
\end{array}
\end{equation*}
hence, $F^*h=g$. 
\end{proof}

\begin{prop} The one-sheeted hyperboloid $\mathcal{H}=\{(x,y,z)\in\mathbb{R}^3:\ x^2+y^2-z^2=1\}$ is a vector bundle over $\mathbb{S}^1$ which is isomorphic to $T\mathbb{S}^1$.
\end{prop}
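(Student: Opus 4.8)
The plan is to realize $\mathcal{H}$ as a trivial real line bundle over $\mathbb{S}^1$, to recall that $T\mathbb{S}^1$ is trivial as well, and to glue the two trivializations into a vector bundle isomorphism covering the identity of the base. First I would define the projection
\begin{equation*}
p:\mathcal{H}\longrightarrow\mathbb{S}^1,\qquad p(x,y,z)=\frac{1}{\sqrt{x^2+y^2}}(x,y),
\end{equation*}
which is well defined and smooth because the defining relation forces $x^2+y^2=1+z^2\geq 1$, keeping the denominator bounded away from zero. The fibre of $p$ over a point $(a,b)\in\mathbb{S}^1$ is the curve $\{(\sqrt{1+z^2}\,a,\sqrt{1+z^2}\,b,z):z\in\mathbb{R}\}$, diffeomorphic to $\mathbb{R}$ through the global coordinate $z$.

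Next I would produce a global trivialization
\begin{equation*}
\Psi:\mathbb{S}^1\times\mathbb{R}\longrightarrow\mathcal{H},\qquad \Psi\big((a,b),t\big)=\big(\sqrt{1+t^2}\,a,\sqrt{1+t^2}\,b,t\big),
\end{equation*}
whose inverse is $(x,y,z)\mapsto(p(x,y,z),z)$. A direct check shows both maps are smooth and mutually inverse and that $p\circ\Psi$ is the projection onto the first factor. I then endow $\mathcal{H}$ with the vector-bundle structure transported from the trivial bundle $\mathbb{S}^1\times\mathbb{R}$ through $\Psi$, that is, with $z$ as its linear fibre coordinate; with this structure $\mathcal{H}$ is a trivial real line bundle over $\mathbb{S}^1$.

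On the other side, $\mathbb{S}^1$ is parallelizable: the nowhere-vanishing tangent field $(a,b)\mapsto(-b,a)$ spans $T_{(a,b)}\mathbb{S}^1$ and yields the trivialization $T\mathbb{S}^1\cong\mathbb{S}^1\times\mathbb{R}$ sending $s(-b,a)$ to $\big((a,b),s\big)$. Composing this with $\Psi$ produces the explicit isomorphism
\begin{equation*}
T\mathbb{S}^1\longrightarrow\mathcal{H},\qquad \big((a,b),\,s(-b,a)\big)\longmapsto\big(\sqrt{1+s^2}\,a,\sqrt{1+s^2}\,b,s\big),
\end{equation*}
which covers the identity of $\mathbb{S}^1$ and is linear on each fibre by construction, hence a vector bundle isomorphism.

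The only point that requires care, rather than computation, is that $\mathcal{H}$ inherits \emph{no} linear structure on its fibres from the ambient $\mathbb{R}^3$: the fibres of $p$ are hyperbola-like curves, not lines through an origin. The substance of the statement is therefore the selection of the global coordinate $z$ (equivalently, the trivialization $\Psi$) as the linear fibre coordinate; once this choice is fixed, both $\mathcal{H}$ and $T\mathbb{S}^1$ are trivial line bundles over $\mathbb{S}^1$ and the isomorphism above is immediate. The residual verifications, that $p$ is a submersion and that $\Psi$ and its inverse are smooth, are routine given the manifest smoothness of $t\mapsto\sqrt{1+t^2}$.
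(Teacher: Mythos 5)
Your proof is correct, but it takes a genuinely different route from the paper's. You fibre $\mathcal{H}$ over $\mathbb{S}^1$ by the radial projection $(x,y,z)\mapsto\frac{1}{\sqrt{x^2+y^2}}(x,y)$, so your fibres are the hyperbola-like curves $\{(\sqrt{1+z^2}\,a,\sqrt{1+z^2}\,b,z):z\in\mathbb{R}\}$, and you then impose the linear structure by declaring $z$ a global fibre coordinate, i.e.\ by transporting the trivial bundle $\mathbb{S}^1\times\mathbb{R}$ through your map $\Psi$; the isomorphism with $T\mathbb{S}^1$ then follows from the parallelizability of $\mathbb{S}^1$. The paper instead exploits the fact that $\mathcal{H}$ is a ruled surface: its projection $P(x,y,z)=\left(\frac{x+yz}{1+z^2},\frac{y-xz}{1+z^2}\right)$ has as fibres the straight lines $(r,s,0)+\mathrm{span}\{(-s,r,1)\}$ lying on $\mathcal{H}$, which carry a natural linear structure as lines in $\mathbb{R}^3$ with origin $(r,s,0)$, and the paper's bundle map $\Psi(x,y,z)=\left(\frac{z(-y+xz)}{1+z^2},\frac{z(x+yz)}{1+z^2}\right)$ into $T\mathbb{S}^1$ is exactly the Euclidean orthogonal projection of each ruling line onto the tangent line of $\mathbb{S}^1$ at the corresponding point. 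Both arguments prove the proposition as stated, and yours is more economical; as you yourself observe, the statement in this form asks only for \emph{some} vector bundle structure, which any manifold diffeomorphic to $\mathbb{S}^1\times\mathbb{R}$ admits by transport of structure. What the paper's choice buys is the specific geometric identification that the rest of the section depends on: the Sasaki metric $\hat{g}$ is carried to $\mathcal{H}$ through that particular $\Psi$, and the subsequent classification of geodesics (the circles $z=\mathrm{const.}$ as horizontal geodesics, the ruling lines as vertical ones, spirals as oblique ones) is phrased relative to it. Under your identification the fibres are the vertical hyperbola branches rather than the rulings, so the concrete description of geodesics obtained later would come out differently; nothing in your argument is wrong, but it would not serve as a drop-in replacement for the paper's construction.
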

\begin{proof} The projection map of $\mathcal{H}$ is given by
	\begin{equation*}	P:\mathcal{H}\longrightarrow \mathbb{S}^1;\ \ 	
	(x,y,z)\longmapsto\displaystyle \left(\frac{x+yz}{1+z^2},\frac{y-xz}{1+z^2}\right).
	\end{equation*}
	
For each $(r,s)\in\mathbb{S}^1$ we have that $P^{-1}\{(r,s)\}=(r,s,0)+span\{(-s,r,1)\},$ so $P$ is surjective and $P^{-1}\{(r,s)\}$ has a vector space structure. Also, $P$ is a submersion because 
\begin{equation*}
(dP)_{(x,y,z)}\displaystyle\left(-y,x,0\right)=\left(\frac{-y+xz}{1+z^2},\frac{x+yz}{1+z^2}\right)\neq 0.
\end{equation*}

To show that $\mathcal{H}$ is isomorphic to $T\mathbb{S}^1$ we consider the map
	\begin{equation*}
	\begin{array}{lcc}
	\mathcal{H}&\xrightarrow{\ \ \ \displaystyle\Psi\ \ \ }& T\mathbb{S}^1\\
	\Bigg\downarrow{P}& &\Bigg\downarrow{\pi}\\
	\mathbb{S}^1&\xrightarrow{\ \ \ \displaystyle\text{Id}\ \ \ }&\mathbb{S}^1
	\end{array}
\end{equation*}
defined by $\Psi(x,y,z)=\displaystyle \left(\frac{z(-y+xz)}{1+z^2},\frac{z(x+yz)}{1+z^2}\right)$ and observe that

\begin{equation*}
\Psi\big|_{P^{-1}\{(r,s)\}}(r-ts,s+tr,t)=\displaystyle \left(-ts,tr\right),
\end{equation*}

so $\Psi\big|_{P^{-1}\{(r,s)\}}:P^{-1}\{(r,s)\}\longrightarrow\pi^{-1}\{(r,s)\}$ is an isomorphism. 
\end{proof}
\textbf{Remark.} We use the fact that $\mathcal{H}$ is the union of lines (also called a \textit{ruled surface})  to see it as a vector bundle over $\mathbb{S}^1$, which is identified with the set $\{(x,y,z)\in\mathbb{R}^3:\ x^2+y^2=1,\ z=0\}$. The map $\Psi$ associates to every point $(x,y,z)\in\mathcal{H}$ the orthogonal projection (with respect to the euclidean metric in $\mathbb{R}^3$) over the tangent line to $\mathbb{S}^1$ at $(P(x,y,z),0)$ (see Figure 1.)
\begin{center}\label{Figure1}
	\includegraphics[width=80mm,scale=1.5]{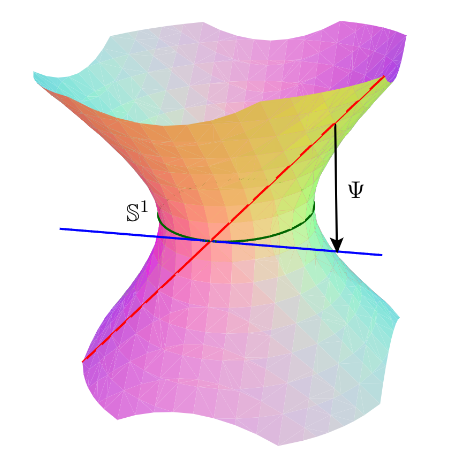}\\
	\small{Figure 1.}
\end{center}

We shall use the isomorphism $\Psi$ to describe the Sasaki metric $\hat{g}$ on $\mathcal{H}$. Fix $q_0=(x_0,y_0,z_0)\in\mathcal{H}$, it is a straightforward calculation to verify that if
\begin{equation*}
\gamma_1(t)=\displaystyle\left(\frac{x_0+y_0z_0}{1+z_0^2},\frac{y_0-x_0z_0}{1+z_0^2},0\right)+(t+z_0)\left(\frac{-y_0+x_0z_0}{1+z_0^2},\frac{x_0+y_0z_0}{1+z_0^2},1\right)
\end{equation*}
then $(P\circ\gamma_1)(t)$ is constant, therefore, $\gamma_1(t)$ is a vertical curve. Also
\begin{center}
	$\begin{array}{ccl}
    \hat{g}_{q_0}(\gamma_1'(0),\gamma_1'(0))&=&h_{P(q_0)}((\Psi\circ\gamma_1)'(0),(\Psi\circ\gamma_1)'(0))\\
    \\
    &=&\displaystyle h_{P(q_0)}\left(\left(\frac{-y_0+x_0z_0}{1+z_0^2},\frac{x_0+y_0z_0}{1+z_0^2}\right),\left(\frac{-y_0+x_0z_0}{1+z_0^2},\frac{x_0+y_0z_0}{1+z_0^2}\right)\right)\\
    \\
    &=&\displaystyle\frac{\mu}{4}\left(\left(\frac{-y_0+x_0z_0}{1+z_0^2}\right)^2+\left(\frac{x_0+y_0z_0}{1+z_0^2}\right)^2\right)\\
    \\
    &=&\displaystyle\frac{\mu}{4}.
	\end{array}$
\end{center}
On the other hand, Proposition \ref{C3.2} implies that given $\xi\in\mathbb{R}$ and $\alpha(t)=\left(\begin{array}{cc}
\cos(t+t_0)&-\sin(t+t_0)\\
\sin(t+t_0)&\cos(t+t_0)\\
\end{array}\right),$ the curve $\eta:
t\longmapsto\xi w_{21}^*(\alpha(t)\cdot o)$ is horizontal in $T\mathbb{F}_\emptyset$ with respect to $\hat{g}$ and it is identified with
\begin{center} $\gamma_2(t)=\Psi^{-1}((dF)_{\alpha(t)\cdot o}(\eta(t)))=(\cos(2(t+t_0))-2\xi \sin(2(t+t_0)),\sin(2(t+t_0))+2\xi \cos(2(t+t_0)),2\xi)\in\mathcal{H}.$ 
\end{center}
By taking $\xi=\frac{z_0}{2}$ and $t_0$ such that 
\begin{center}
	$\left\{\begin{array}{l}
	\cos(2t_0)-z_0\sin(2t_0)=x_0\\
	\sin(2t_0)+z_0\cos(2t_0)=y_0\\
	\end{array}\right.$
\end{center}
we have that $\gamma_2(0)=q_0$ and
\begin{center}
	$\begin{array}{ccl}
	\hat{g}_{q_0}(\gamma_2'(0),\gamma_2'(0))&=&\hat{g}_{\eta(0)}(\eta'(0),\eta'(0))\\
	\\
	&=&\displaystyle g_{\alpha(0)\cdot o}\left(\left.\frac{d}{dt}\alpha(t)\cdot o\ \right|_{t=0},\left.\frac{d}{dt}\alpha(t)\cdot o\ \right|_{t=0}\right)\\
	\\
	&=&\displaystyle g_{\alpha(0)\cdot o}\left(\left.\frac{d}{dt}\exp(tw_{21})\cdot(\alpha(0)\cdot o)\ \right|_{t=0},\left.\frac{d}{dt}\exp(tw_{21})\cdot(\alpha(0)\cdot o) \right|_{t=0}\right)\\
	\\
	&=&\displaystyle g(w_{21},w_{21})\\
	\\
	&=&\mu.
	\end{array}$
\end{center}
Summarizing, we have that given $q_0=(x_0,y_0,z_0)\in\mathcal{H},$
\begin{equation*}
\begin{array}{ccl}
T_{q_0}\mathcal{H}&=&span\{\gamma_1'(0),\gamma_2'(0)\}\\
\\
&=&\displaystyle span\left\{\left(\frac{-y_0+x_0z_0}{1+z_0^2},\frac{x_0+y_0z_0}{1+z_0^2},1\right),\left(-2y_0,2x_0,0\right)\right\}\\
\\
&=&\displaystyle span\left\{\left(\frac{-y_0+x_0z_0}{1+z_0^2},\frac{x_0+y_0z_0}{1+z_0^2},1\right),\left(-y_0,x_0,0\right)\right\}
\end{array}
\end{equation*} 
where the generators satisfy 
\begin{equation}\label{8}
\begin{array}{l}
i)\ \displaystyle \hat{g}_{q_0}\left(\left(\frac{-y_0+x_0z_0}{1+z_0^2},\frac{x_0+y_0z_0}{1+z_0^2},1\right),\left(\frac{-y_0+x_0z_0}{1+z_0^2},\frac{x_0+y_0z_0}{1+z_0^2},1\right)\right)=\frac{\mu}{4},\\
\\
ii)\ \displaystyle\hat{g}_{q_0}(\left(-y_0,x_0,0\right),\left(-y_0,x_0,0\right))=\frac{\mu}{4}\ \text{and}\\
\\
iii)\ \displaystyle\hat{g}_{q_0}\left(\left(\frac{-y_0+x_0z_0}{1+z_0^2},\frac{x_0+y_0z_0}{1+z_0^2},1\right),\left(-y_0,x_0,0\right)\right)=0.\end{array}
\end{equation}

Now, consider the parametrization
\begin{equation*}
\left\{\begin{array}{l}
x=\cos(u)-v\sin(u)\\
y=\sin(u)+v\cos(u)\\
z=v
\end{array}\right.,\hspace{1cm} (u,v)\in(-\pi,\pi)\times\mathbb{R}
\end{equation*}
of $\mathcal{H}.$ For each $(u,v)\in(-\pi,\pi)\times\mathbb{R}$ we have

\begin{equation*}
(dX)\left(\displaystyle\frac{\partial}{\partial u}\right)=-y\frac{\partial}{\partial x}+x\frac{\partial}{\partial y}
\end{equation*}
and
\begin{equation*}
(dX)\left(\displaystyle\frac{\partial}{\partial v}\right)=\left(\frac{-y+xz}{1+z^2}\right)\frac{\partial}{\partial x}+\left(\frac{x+yz}{1+z^2}\right)\frac{\partial}{\partial y}+\frac{\partial}{\partial z},
\end{equation*}

(where $X(u,v)=(x,y,z)$), so, by \eqref{8} we obtain that $X^*\hat{g}=\displaystyle \frac{\mu}{4}(du^2+dv^2)$. Then, geodesics on $(\mathcal{H},\hat{g})$ are direct images under $X$ of lines
\begin{equation}\label{9}
au+bv=c,\ \ (u,v)\in(-\pi,\pi)\times\mathbb{R},\ a,b,c\in\mathbb{R}.
\end{equation}
We distinguish three cases:

\textit{Case 1.} $a=0$ $b\neq0$:\\

In this case, equation \eqref{9} is equivalent to

$$v=\displaystyle\frac{c}{b},$$
	
whose image by $X$ is a circle in $\mathcal{H}$ of radius $\sqrt{1+\left(\frac{c}{b}\right)^2}$ contained in the plane $z=\frac{c}{b}$. These are horizontal geodesics.

\begin{center}
\includegraphics{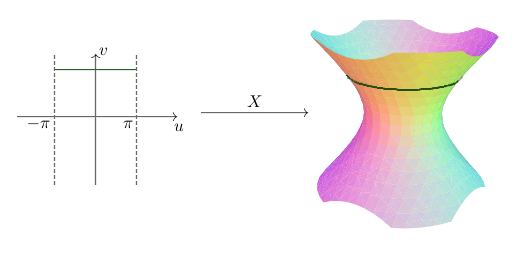} 
\end{center}

\textit{Case 2.} $a\neq0$, $b=0$:\\

Equation \eqref{9} becomes 

$$u=\displaystyle\frac{c}{a}.$$

Its image by $X$ is the line
\begin{center}
	$\textbf{r}(t)=\left(\cos\left(\frac{c}{a}\right),\sin\left(\frac{c}{a}\right),0\right)+t\left(-\sin\left(\frac{c}{a}\right),\cos\left(\frac{c}{a}\right),1\right),$ \ \ $t\in\mathbb{R}.$ 
\end{center}
These are vertical geodesics.

\textit{Case 3.} $a\neq0$, $b\neq0$:\\

When $a$ and $b$ are simultaneously non-zero we obtain a ``spiral" contained in $\mathcal{H}$, it can be parametrized by 
\begin{center}
	$\textbf{x}(t)=\displaystyle\left(\cos\left(t\right)-\frac{c-at}{b}\sin(t),\sin\left(t\right)+\frac{c-at}{b}\cos(t),\frac{c-at}{b}\right),$ \ \ $t\in\mathbb{R}.$ 
\end{center}
Such a geodesic is \textit{oblique.} See Figure 3. 
\begin{center}\label{Figure1}
	\includegraphics[width=80mm,scale=1.5]{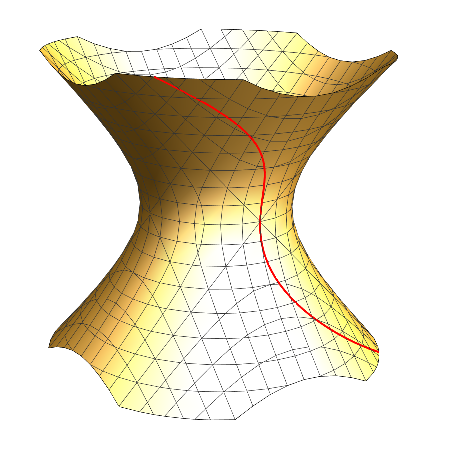}\\
	\small{Figure 3.}
\end{center}

\subsection*{Acknowledgements}
\normalfont
L. Grama is partially supported by 2018/13481-0;  2021/04003-0 (FAPESP) and 305036/2019-0 (CNPq).

\bibliographystyle{abbvr}

\begin{thebibliography}{12}

\bibitem{arv1} D.Alekseevsky, A.Arvanitoyeorgos, Riemannian flag manifolds with homogeneous geodesics. Trans. Amer. Math. Soc. 359 (2007), no. 8, 3769--3789.

\bibitem{arv2} A. Arvanitoyeorgos, N.Souris, M.Statha, {\em Geodesic orbit metrics in a class of homogeneous bundles over real and complex Stiefel manifolds}. Geom. Dedicata 215 (2021), 31--50. 

\bibitem{Bes} L. A. Besse, \textit{Einstein manifolds. Reprint of the 1987 edition,} Classics in Mathematics (2008): 7.

\bibitem{BBGGS}E.Ballico, S.Barmeier, E.Gasparim, L.Grama, L.San Martin,\textit{A Lie theoretical construction of a Landau-Ginzburg model without projective mirrors}. Manuscripta Math. 158 (2019), no. 1--2, 85--101.

\bibitem{DZ} M. Djaa, A. Zagane, \textit{On geodesics of warped Sasaki metric,} Mathematical sciences and Applications E-Notes, Volume 5, Number 1 (2017), 85-92.

\bibitem{GGSM} E.Gasparim, L.Grama, L. San Martin, {\em Symplectic Lefschetz fibrations on adjoint orbits}. Forum Math. 28 (2016), no. 5, 967--979. 


\bibitem{EGSM} E. Gasparim, L. Grama and L. A. B. San Martin, \textit{Adjoint orbits of semi-simple Lie groups and Lagrangean submanifolds,} Proceeding of the Edinburgh Mathematical Society v. 60, p. 361-385, 2017.

\bibitem{GGN} B. Grajales, L. Grama and C. J. C. Negreiros, \textit{Geodesic Orbit Spaces in real flag manifolds,} to appear in Communications in Analysis and Geometry.

\bibitem{souris}  N.Souris, {\em Geodesic orbit metrics in compact homogeneous manifolds with equivalent isotropy submodules}. Transform. Groups 23 (2018), no. 4, 1149--1165. 
\end{thebibliography}
\renewcommand\refname{}
\section*{Bibliography}

\end{document}